\newcommand{\beq}{\begin{equation}}
\newcommand{\eeq}{\end{equation}}
\newcommand{\bea}{\begin{eqnarray}}
\newcommand{\eea}{\end{eqnarray}}
\newcommand{\beas}{\begin{eqnarray*}}
\newcommand{\eeas}{\end{eqnarray*}}
\newtheorem{theorem}{Theorem}[section]
\newtheorem{definition}[theorem]{Definition}
\newtheorem{proposition}[theorem]{Proposition}
\newtheorem{lemma}[theorem]{Lemma}
\newtheorem{remark}[theorem]{Remark}
\newtheorem{example}[theorem]{Example}
\newtheorem{examples}[theorem]{Examples}
\newtheorem{foo}[theorem]{Remarks}
\newtheorem{assumption}[theorem]{Assumption}
\newenvironment{proof}{\addvspace{\medskipamount}\par\noindent{\it
Proof}.}
{\unskip\nobreak\hfill$\Box$\par\addvspace{\medskipamount}}
\title{Convergence of the financial value of weak information for a sequence of discrete-time markets}
\author{Geoff Lindsell}
\date{\today}
\begin{document}
\maketitle
\begin{abstract}
We examine weak anticipations in discrete-time and continuous-time financial markets consisting of one risk-free asset and multiple risky assets, defining a minimal probability measure associated with the anticipation that does not depend on the choice of a utility function.  We then define the financial value of weak information in the discrete-time economies and show that these values converge to the financial value of weak information in the continuous-time economy in the case of a complete market.
\end{abstract}

\begin{section}{Introduction}
A typical assumption in financial mathematics is the existence of portfolio consisting of one risk-free asset, and multiple risky assets, measurable to a common filtration satisfying the usual conditions.  That is, along a finite time horizon $[0,T]$, we assume there is a $d$-dimensional vector $S_{t} \in \mathbb{R}^{d} \text{, } S_{t} \in \mathcal{F}_{t}$, where $(S_{t})_{0 \leq t \leq T}$ denotes the price process of the various assets, and $\mathcal{F} = (\mathcal{F}_{t})_{0 \leq t \leq T}$ is the "public" information flow available to the "ordinary investor." 
\\
We extend this assumption to the existence of an "insider" or "informed investor" that possesses, from the beginning, additional information about the outcome of a some random variable Y.  One approach to model this information is using an enlarged filtration $\mathcal{G} = (\mathcal{G}_{t})_{0 \leq t \leq T}$ where $\mathcal{G}_{t} = \underset{\epsilon > 0}{\bigcap}\left(\mathcal{F}_{t + \epsilon} \vee \sigma(Y)\right)$.  See, for example, the works of Amendinger \cite{MR1775229, MR1954386, MR1632213}.
\\
In this paper, we use an alternative approach to model this additional information.  Rather than considering the enlarged filtration, we assume that the insider has knowledge of the law of a functional Y of the price process, rather than the exact value of Y as the insider would in the enlargement approach.
\\
In doing so, we follow a multiple step program. Firstly, we define the minimal probability measure associated with weak information in discrete-time economies as well as the continuous economy. Secondly, we define the financial value of weak information in discrete-time economies as well as the continuous time economies. Lastly, we show that, under certain conditions, the financial value of weak information in discrete-time economies converges to the financial value of weak information in the continuous time economy.
\\
The financial value of weak information is expressed in the language of utility functions. As in the works of Kramkov and Schachermayer, \cite{MR1722287}, we have the following definition for utility functions.
\begin{definition}
A \textbf{utility function} is a strictly increasing, strictly concave, and twice continuously differentiable function
\begin{align*}
U: (0, + \infty) \rightarrow \mathbb{R}
\end{align*}
which satisfies the \textbf{Inada conditions}
\begin{align*}
\lim_{x \rightarrow + \infty} U'(x) = 0, \lim_{x \rightarrow 0^{+}} U'(x) = + \infty
\end{align*}
where we use the convention that $U(x) = - \infty$ for $x \leq 0$.  We shall denote by $I$ the inverse of $U'$, and by $\tilde{U}$ the convex conjugate of $U$:
\begin{align*}
\tilde{U}(y) = \max_{x > 0}(U(x) - xy)
\end{align*}
That is, the Fenchel-Legendre transform of $-U(-x)$.
\end{definition}
\end{section}

\begin{section}{Weak Anticipation and the Financial Value of Weak Information in the Discrete-Time Economy}
\begin{subsection}{Mathematical Framework}
In this section we consider discrete-time economies of the form
\begin{align*}
\left(\Omega_{n}, \mathcal{H}^{n} = (\mathcal{H}_{m})_{0 \leq m \leq n}, P_{n}, S^{n} = (S_{m})_{0 \leq m \leq n} \right)
\end{align*}
where $\Omega_{n}$ represents a discrete sample space, $\mathcal{H}^{n}$ represents the discrete filtration, $P_{n}$ represents a probability measure on $\Omega_{n}$ and $S^{n}$ is the price process in this discrete economy. We will assume that $S_{m} \in \mathbb{R}^{d}$ and that one of the assets is a risk-free asset with r denoting the risk free rate of return of the firsk-free asset. Further, we assume $P_{n}(\omega) > 0$ for each $\omega \in \Omega_{n}$ \\
Let $V_{n}$ represent the value of the discrete-time portfolio at time $n$, where the initial wealth of the investor, $V_{0} = x$, for fixed $x \in \mathbb{R}$. \\ 
Throughout this work, $\mathcal{F}^{n} = (\mathcal{F}_{m})_{0 \leq m \leq n}$ will represent the filtration of the ordinary investor.  That is, $\mathcal{F}^{n}$ will represent the public information flow where $S^{n} \in \mathcal{F}^{n}$. \\
To enlarge this filtration for the informed insider, let $\mathcal{P}_{n}$ be a discrete set containing the range of $Y_{n}$, endowed with the discrete $\sigma$-algebra and let $Y_{n}: \Omega_{n} \rightarrow \mathcal{P}_{n}$ be an $\mathcal{F}_{n}$-measurable random variable.  we shall denote by $\mathcal{G}^{n}$ the filtration $\mathcal{F}^{n}$ initially enlarged with $Y_{n}$, i.e. $\mathcal{G}_{m} = \mathcal{F}_{m} \vee \sigma (Y_{n})$.  Denote by $\nu_{n}$ the law of $Y_{n}$ and assume that $Y_{n}$ admits a regular disintegration with respect to the filtration $\mathcal{F}^{n}$. \\
Let us now introduce some common definitions, seen, for example, in Baudoin, Nguyen-Ngoc \cite{MR2213259}
\begin{definition}
The space $\mathcal{M}_{\mathcal{H}^{n}}(S^{n})$ of martingale measures in the discrete-time economy is the set of probability measures $\tilde{P}_{n} \sim P_{n}$ such that $(S_{m})_{0 \leq m \leq n}$ is an $\mathcal{H}^{n}$-adapted martingale under $\tilde{P}_{n}$.
\end{definition} 
\begin{definition}
We say that there is no arbitrage on the financial market 
\begin{align*}
(\Omega, (\mathcal{H}_{m})_{0 \leq m \leq n}, (S_{m})_{0 \leq m \leq n}, P_{n})
\end{align*}
if there exists a probability measure $\tilde{P}_{n}$ equivalent to $P_{n}$ such that $(S_{m})_{0 \leq m \leq n}$ is an $\mathcal{H}^{n}$-adapted martingale under $\tilde{P}_{n}$.
\end{definition}
If we assume our market is free from arbitrage, then we may assume $\mathcal{M}_{\mathcal{H}^{n}}(S^{n})$ is nonempty.  For a complete market, $\mathcal{M}_{\mathcal{H}^{n}}(S^{n})$ is a singleton, say $\mathcal{M}_{\mathcal{H}^{n}}(S^{n}) = \{\tilde{P}_{n}\}$, where $\tilde{P}_{n}$ is the unique probability measure under which discounted stock prices are martingales (reference \cite{MR2049045}, for example).  We will assume complete markets, i.e $\mathcal{M}_{\mathcal{F}^{n}}(S^{n})$ is a singleton, in what follows.
\begin{definition}
A \textbf{trading strategy} is defined as a stochastic process
\begin{align*}
\Theta_{m} = ((\Theta_{m}^{0},\Theta_{m}^{1},...,\Theta_{m}^{d}))_{0 \leq m \leq n}
\end{align*}
in $\mathbb{R}^{d+1}$, where $\Theta_{m}^{i}$ denotes the number of shares of an asset i held in the portfolio at time m.  We assume the sequence $\Theta$ is predictable in the sense that $\Theta_{0}^{i} \in \mathcal{H}_{0}$, and $\Theta_{m}^{i} \in \mathcal{H}_{m-1}$ for $m \geq 1$.
\end{definition}
\begin{definition}
The \textbf{value of the portfolio} at time m is the scalar product
\begin{align*}
V_{m}(\Theta_{m}) = \Theta_{m} \cdot S_{m} = \sum_{i = 0}^{d}\Theta_{m}^{i}S_{m}^{i}
\end{align*}
Its \textbf{discounted value} is $\tilde{V}_{m}(\Theta_{m}) = \frac{1}{S_{m}^{0}}(\Theta_{m} \cdot S_{m})$
\end{definition}
\begin{definition}
A strategy is called \textbf{self-financing} if the following equation is satisfied for all $m \in \{0,1,...,n-1\}:$
\begin{align*}
\Theta_{m} \cdot S_{m} = \Theta_{m+1} \cdot S_{m}
\end{align*}
The interpretation is that an investor readjusts the position from $\Theta_{m}$ to $\Theta_{m+1}$ without bringing or consuming any wealth.
\end{definition}
\begin{definition}
A strategy $\Theta$ is \textbf{admissible} if it is self-financing and if $V_{m}(\Theta_{m}) \geq 0$ for any $m \in \{0,1,...,n\}.$
\end{definition}
\begin{proposition}
The value process $(V_{m}(\Theta_{m}))_{0 \leq m \leq n}$ is a martingale for any admissible strategy $\Theta_{m}$ and any equivalent martingale measure $\tilde{P}_{n} \in \mathcal{M}_{\mathcal{H}^{n}}(S^{n})$.
\end{proposition}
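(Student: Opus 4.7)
The plan is to show directly that under any $\tilde{P}_n \in \mathcal{M}_{\mathcal{H}^n}(S^n)$, the one-step increments of $V_m(\Theta_m)$ have vanishing conditional expectation with respect to $\mathcal{H}_m$, which combined with $\mathcal{H}^n$-adaptedness and integrability gives the martingale property.

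First I would rewrite the increment $V_{m+1}(\Theta_{m+1}) - V_m(\Theta_m) = \Theta_{m+1}\cdot S_{m+1} - \Theta_m\cdot S_m$ using the self-financing identity $\Theta_m\cdot S_m = \Theta_{m+1}\cdot S_m$, obtaining the key telescoping formula
\begin{align*}
V_{m+1}(\Theta_{m+1}) - V_m(\Theta_m) = \Theta_{m+1}\cdot(S_{m+1}-S_m).
\end{align*}
Next I would take the $\tilde{P}_n$-conditional expectation with respect to $\mathcal{H}_m$; since $\Theta_{m+1}$ is $\mathcal{H}_m$-measurable by predictability, it factors out of the conditional expectation, leaving $\Theta_{m+1}\cdot \tilde{E}_{\tilde{P}_n}[S_{m+1}-S_m\mid \mathcal{H}_m]$, which is zero because $(S_m)$ is an $\mathcal{H}^n$-martingale under $\tilde{P}_n$ by definition of $\mathcal{M}_{\mathcal{H}^n}(S^n)$. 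Iterating (or equivalently, noting this holds for each $m$) yields the martingale property.

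The only remaining points are $\mathcal{H}^n$-adaptedness of $V_m(\Theta_m)$, which is immediate since $\Theta_m$ and $S_m$ are $\mathcal{H}_m$-measurable, and integrability under $\tilde{P}_n$. Integrability is essentially free here: $\Omega_n$ is a discrete sample space with $P_n(\omega)>0$ everywhere and $\tilde{P}_n\sim P_n$, so on each $\omega$ the random variables take only finitely (or at worst countably) many values, and admissibility ($V_m\geq 0$) together with the self-financing telescoping representation $V_m = V_0 + \sum_{k=1}^m \Theta_k\cdot(S_k-S_{k-1})$ reduces integrability of $V_m$ to integrability of the individual stock increments under $\tilde{P}_n$, which holds by definition of a martingale measure.

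There is no genuine obstacle in this argument; the statement is essentially the discrete-time version of the standard fact that stochastic integrals against a martingale are martingales, and the only subtlety worth flagging is that the proposition asserts the martingale property for $V_m$ itself rather than the discounted value $\tilde{V}_m$. This is consistent with the paper's convention that $(S_m)$ (which includes the risk-free coordinate $S^0_m$) is required to be a martingale under $\tilde{P}_n$, so discounting is already built into the definition of $\mathcal{M}_{\mathcal{H}^n}(S^n)$, and no separate division by $S^0_m$ is needed in the proof.
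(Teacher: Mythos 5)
Your proof is correct and follows essentially the same route as the paper's: both rest on the same three ingredients applied in the same way (predictability of $\Theta_{m+1}$ to pull it out of the conditional expectation, the martingale property of $S$ under $\tilde{P}_n$, and the self-financing identity), with your version merely phrased in terms of the increment $\Theta_{m+1}\cdot(S_{m+1}-S_m)$ rather than computing $E_{\tilde{P}_n}[V_{m+1}\mid\mathcal{H}_m]$ directly. Your additional remarks on adaptedness, integrability, and the discounting convention are sound and go slightly beyond what the paper bothers to check.
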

\begin{proof}
Let $E_{\tilde{P}_{n}}$ denote the conditional expecation with respect to $\tilde{P}_{n}$.  Then using the predictability of $\Theta_{m}$, the martingale property of $S_{m}$, and the self-financing property, respectively we see
\begin{align*}
E_{\tilde{P}_{n}}[V_{m+1}(\Theta_{m+1}) | \mathcal{F}_{m}] &= E_{\tilde{P}_{n}}[\Theta_{m+1} \cdot S_{m+1} | \mathcal{F}_{m}] \\
&= \Theta_{m+1} E_{\tilde{P}_{n}}[S_{m+1} | \mathcal{F}_{m}] \\
&= \Theta_{m+1} \cdot S_{m} \\
&= \Theta_{m} \cdot S_{m} \\
&= V_{m}(\Theta_{m})
\end{align*}
\end{proof}
It is common in the literature, e.g. in the works of Amendinger \cite{MR1775229, MR1954386, MR1632213} or Baudoin \cite{MR1919610, MR2021790, MR2213259} to make the following set of assumptions when we consider the strong information case.
\begin{assumption}
There exists a jointly measurable and $\mathcal{F}^{n}$-adapted process 
\begin{align*}
\eta_{m}^{y,n}, \text{ } 0 \leq m < n, \text{ } y \in \mathcal{P}_{n}
\end{align*}
satisfying for all $0 \leq m < n$ and $y \in \mathcal{P}_{n}$,
\begin{align*}
P[Y_{n} = y | \mathcal{F}_{m}] = \eta_{m}^{y,n}P[Y_{n} = y]
\end{align*}
\end{assumption}
\begin{remark}
We can note here that, if we denote by $P_{n}^{y}$ the disintegrated probability measure defined by $P_{n}^{y} = P_{n}[\cdot | Y_{n} = y]$, then the above assumption implies that for $m <  n$,
\begin{align*}
P_{/\mathcal{F}_{m}}^{y} = \eta_{m}^{y,n}P_{/\mathcal{F}_{m}}
\end{align*}
In particular, for each $y \in \mathcal{P}_{n}$, the process $(\eta_{m}^{y})_{0 \leq m < n}$ is a martingale in the filtration $\mathcal{F}^{n}$.
\end{remark}
However, in this work we will not need the previous assumption of the existence of a distintegrated probability measure which tells us the anticipation path-by-path.  Instead, we study the distribution of the anticipation, which characterizes the weak information case.  More precisely, we have the following definition.
\begin{definition}
The probability measure 
\begin{align*}
P_{n}^{\nu_{n}}(\omega) = \sum_{y \in \mathcal{P}_{n}} \tilde{P}_{n}[\omega | Y_{n} = y]\nu_{n}[Y_{n} = y]
\end{align*}
is called the \textbf{minimal probability measure associated with weak information} $(Y_{n}, \nu_{n})$, where $\tilde{P}_{n} \in \mathcal{M}_{\mathcal{F}^{n}}(S^{n})$ is an equivalent martingale measure.
\end{definition}
By Proposition 3.1 of \cite{MR3954302} (see below), $P_{n}^{\nu_{n}}$ is minimal in the set of probability measures $Q_{n}$ to $P_{n}$ such that $Q_{n}[Y_{n} = y] = \nu_{n}[Y_{n} = y]$ for all $y \in \mathcal{P}_{n}$.  We denote this set by $\mathcal{E}^{\nu_{n}}$.  In particular we have,
\begin{proposition}
If $\phi$ is a convex function, then
\begin{align*}
\underset{Q_{n} \in \mathcal{E}^{\nu_{n}}}{\min} E_{\tilde{P}_{n}}\left[ \phi \left( \frac{dQ_{n}}{d\tilde{P}_{n}} \right) \right] =  E_{\tilde{P}_{n}}\left[ \phi \left( \frac{dP_{n}^{\nu_{n}}}{d\tilde{P}_{n}} \right) \right]
\end{align*}
where $\frac{dQ_{n}}{d\tilde{P}_{n}}$ denotes the Radon-Nikodym derivative of $Q_{n}$ with respect to $\tilde{P}_{n}$.
\end{proposition}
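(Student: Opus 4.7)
The plan is to prove the minimization via a conditional Jensen's inequality argument, conditioning on $\sigma(Y_{n})$. The key observation is that the Radon-Nikodym derivative $\tfrac{dP_{n}^{\nu_{n}}}{d\tilde{P}_{n}}$ is $\sigma(Y_{n})$-measurable, and in fact equals the $\tilde{P}_{n}$-conditional expectation of $\tfrac{dQ_{n}}{d\tilde{P}_{n}}$ given $\sigma(Y_{n})$ for every competitor $Q_{n} \in \mathcal{E}^{\nu_{n}}$.

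First, I would carry out the direct computation of $\tfrac{dP_{n}^{\nu_{n}}}{d\tilde{P}_{n}}$. On the atom $\omega$ with $Y_{n}(\omega)=y$, the defining formula for $P_{n}^{\nu_{n}}$ yields $P_{n}^{\nu_{n}}(\omega) = \tilde{P}_{n}[\omega \mid Y_{n}=y]\,\nu_{n}[Y_{n}=y] = \tilde{P}_{n}(\omega)\,\tfrac{\nu_{n}[Y_{n}=y]}{\tilde{P}_{n}[Y_{n}=y]}$, so
\begin{align*}
\frac{dP_{n}^{\nu_{n}}}{d\tilde{P}_{n}}(\omega) = \frac{\nu_{n}[Y_{n}=Y_{n}(\omega)]}{\tilde{P}_{n}[Y_{n}=Y_{n}(\omega)]},
\end{align*}
which is manifestly a function of $Y_{n}$, hence $\sigma(Y_{n})$-measurable. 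A one-line summation check also confirms $P_{n}^{\nu_{n}}[Y_{n}=y]=\nu_{n}[Y_{n}=y]$, so $P_{n}^{\nu_{n}} \in \mathcal{E}^{\nu_{n}}$, giving attainment of the minimum.

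Next, for an arbitrary $Q_{n} \in \mathcal{E}^{\nu_{n}}$, set $Z_{n} = \tfrac{dQ_{n}}{d\tilde{P}_{n}}$ and compute $E_{\tilde{P}_{n}}[Z_{n} \mid \sigma(Y_{n})]$ atom by atom: on $\{Y_{n}=y\}$ it equals $\tfrac{1}{\tilde{P}_{n}[Y_{n}=y]}\sum_{\omega: Y_{n}(\omega)=y} Z_{n}(\omega)\tilde{P}_{n}(\omega) = \tfrac{Q_{n}[Y_{n}=y]}{\tilde{P}_{n}[Y_{n}=y]} = \tfrac{\nu_{n}[Y_{n}=y]}{\tilde{P}_{n}[Y_{n}=y]}$, where the constraint defining $\mathcal{E}^{\nu_{n}}$ is used in the last step. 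Thus
\begin{align*}
E_{\tilde{P}_{n}}\!\left[\frac{dQ_{n}}{d\tilde{P}_{n}} \,\Big|\, \sigma(Y_{n})\right] = \frac{dP_{n}^{\nu_{n}}}{d\tilde{P}_{n}}
\end{align*}
for every $Q_{n} \in \mathcal{E}^{\nu_{n}}$.

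The proof then concludes by applying the conditional Jensen inequality to the convex function $\phi$ with respect to $\sigma(Y_{n})$ and taking expectations, yielding
\begin{align*}
E_{\tilde{P}_{n}}\!\left[\phi\!\left(\tfrac{dQ_{n}}{d\tilde{P}_{n}}\right)\right] \geq E_{\tilde{P}_{n}}\!\left[\phi\!\left(E_{\tilde{P}_{n}}\!\left[\tfrac{dQ_{n}}{d\tilde{P}_{n}} \,\big|\, \sigma(Y_{n})\right]\right)\right] = E_{\tilde{P}_{n}}\!\left[\phi\!\left(\tfrac{dP_{n}^{\nu_{n}}}{d\tilde{P}_{n}}\right)\right],
\end{align*}
with equality when $Q_{n}=P_{n}^{\nu_{n}}$. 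There is no serious obstacle: the discrete setting lets us replace regular conditional distributions with sums over atoms, and because $\mathcal{P}_{n}$ is discrete one need not worry about integrability issues beyond assuming $\phi$ is defined where needed (finiteness of $E_{\tilde{P}_{n}}[\phi(dP_{n}^{\nu_{n}}/d\tilde{P}_{n})]$ can be stated as a hypothesis, or the inequality read with $+\infty$ on the right being vacuous). The only mildly delicate step is recognizing that the constraint $Q_{n}[Y_{n}=y]=\nu_{n}[Y_{n}=y]$ is exactly what fixes the conditional expectation to a single common value across the whole family $\mathcal{E}^{\nu_{n}}$; this is the heart of the minimality property.
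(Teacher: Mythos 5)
Your proof is correct. Note that the paper itself does not prove this proposition; it defers to Proposition 3.1 of the cited reference, so there is no in-paper argument to compare against. Your route — identifying $\frac{dP_{n}^{\nu_{n}}}{d\tilde{P}_{n}}$ as the common $\sigma(Y_{n})$-conditional expectation of $\frac{dQ_{n}}{d\tilde{P}_{n}}$ over all $Q_{n}\in\mathcal{E}^{\nu_{n}}$ and then applying conditional Jensen, with attainment at $Q_{n}=P_{n}^{\nu_{n}}$ — is the standard argument behind this minimality property (it is the discrete analogue of the projection argument used for $P^{\nu}$ in the continuous case). The only point worth making explicit is that $P_{n}^{\nu_{n}}\in\mathcal{E}^{\nu_{n}}$ also requires $P_{n}^{\nu_{n}}\sim P_{n}$, which holds precisely because $\nu_{n}$ is assumed equivalent to the law of $Y_{n}$ (so $\nu_{n}[Y_{n}=y]>0$ for every atom $y$ charged by $\tilde{P}_{n}$); in the finite discrete setting all sums are finite, so no further integrability caveats are needed.
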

In this work we want to distinguish between the insider's anticipation and the uninformed investor at the terminal time.  Since we focus on maximizing utility of wealth rather than the value of wealth, we have the following definition.
\begin{definition}
The \textbf{financial value of weak information} is the lowest expected utility that can be gained from anticipation.  That is,
\begin{align*}
u(x, \nu_{n}) = \underset{Q_{n} \in \mathcal{E}^{\nu_{n}}}{\min}\underset{\Theta \in \mathcal{A}_{\mathcal{G}^{n}}(S^{n})}{\max}E_{Q_{n}}[U(V_{n})]
\end{align*}
\end{definition}
\cite{MR3954302} gives us a closed form expression for the financial value of weak information. Namely, we have the following result.
\begin{theorem}
The financial value of weak information in a complete market is
\begin{align*}
u(x, \nu_{n}) = \underset{\Theta \in \mathcal{A}_{\mathcal{G}^{n}}(S^{n})}{\max}E_{P_{n}^{\nu_{n}}}[U(V_{n})] = E_{P_{n}^{\nu_{n}}}\left[U \left( I\left( \frac{\Lambda_{n}(x)}{(1 + r)^{n}} \right) \right) \frac{d\tilde{P}_{n}}{dP_{n}^{\nu_{n}}}\right]
\end{align*}
where $\Lambda_{n}(x)$ is determined by
\begin{align*}
E_{\tilde{P}_{n}} \left[ \frac{1}{(1+r)^{n}} I \left( \frac{\Lambda_{n}(x)}{(1+r)^{n}} \frac{d\tilde{P}_{n}}{dP_{n}^{\nu_{n}}} \right) \right] = x
\end{align*}
where $\tilde{P}_{n} \in \mathcal{M}_{\mathcal{F}^{n}}(S^{n})$ is the unique probability measure under which the price processes are martingales. 
\end{theorem}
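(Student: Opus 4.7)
My plan is to reduce the min-max defining $u(x,\nu_n)$ to a single utility maximization under $P_n^{\nu_n}$ via convex duality, and then to solve that complete-market problem with a standard Lagrangian argument. First I would fix $Q_n \in \mathcal{E}^{\nu_n}$ and dualize the inner maximization. Because the market is complete under $\mathcal{F}^n$ with unique equivalent martingale measure $\tilde{P}_n$, and because $\mathcal{G}_n = \mathcal{F}_n$ (since $Y_n$ is $\mathcal{F}_n$-measurable), the admissible terminal wealths for $\mathcal{G}^n$-predictable strategies are exactly the nonnegative $\mathcal{F}_n$-measurable claims $V_n$ satisfying $E_{\tilde{P}_n}[V_n/(1+r)^n] = x$. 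Standard Lagrangian duality then gives
\begin{align*}
\max_{\Theta \in \mathcal{A}_{\mathcal{G}^n}(S^n)} E_{Q_n}[U(V_n)] = \inf_{\lambda > 0}\left\{\lambda x + E_{Q_n}\left[\tilde{U}\left(\frac{\lambda}{(1+r)^n}\frac{d\tilde{P}_n}{dQ_n}\right)\right]\right\},
\end{align*}
with pointwise maximizer $V_n^{Q,*} = I\!\left(\lambda \frac{d\tilde{P}_n/dQ_n}{(1+r)^n}\right)$ and $\lambda$ pinned down by the $\tilde{P}_n$-budget constraint.

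Next I would minimize over $Q_n$. Rewriting the dual expectation under $\tilde{P}_n$ gives $E_{\tilde{P}_n}[\phi(dQ_n/d\tilde{P}_n)]$, where $\phi(y) = y\,\tilde{U}(\lambda/(y(1+r)^n))$. A short second-derivative computation, which reduces to $\phi''(y) \propto \tilde{U}''(\lambda/(y(1+r)^n))/y^3 \geq 0$, shows that $\phi$ is convex. Proposition 3.1 thus applies and, for each fixed $\lambda$, the minimum over $Q_n \in \mathcal{E}^{\nu_n}$ is attained at $Q_n = P_n^{\nu_n}$. Since this minimizer does not depend on $\lambda$, the two infima can be interchanged, collapsing $u(x,\nu_n)$ to the dual of the utility problem under $P_n^{\nu_n}$, which in turn equals $\max_\Theta E_{P_n^{\nu_n}}[U(V_n)]$; this yields the first claimed equality.

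To finish, I would solve the utility maximization under $P_n^{\nu_n}$ explicitly. The pointwise first-order condition $U'(V_n^*) = \Lambda_n(x)\,\frac{d\tilde{P}_n/dP_n^{\nu_n}}{(1+r)^n}$ inverts via $I = (U')^{-1}$ to give the optimal terminal wealth, with $\Lambda_n(x)$ then determined by the $\tilde{P}_n$-budget equation displayed in the theorem. Substituting back into $E_{P_n^{\nu_n}}[U(V_n^*)]$ and rewriting the expectation via the Radon-Nikodym derivative $d\tilde{P}_n/dP_n^{\nu_n}$ recovers the stated closed form.

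The main obstacle I anticipate is the very first step: verifying that the attainable wealths for $\mathcal{G}^n$-predictable strategies are exactly those meeting the $\tilde{P}_n$-budget constraint. Although $\mathcal{G}_n = \mathcal{F}_n$ makes this plausible at the terminal $\sigma$-algebra level, $\tilde{P}_n$ need not be a martingale measure for $S$ in the enlarged filtration, so the identification of the attainable set in $\mathcal{G}^n$ must be drawn carefully from the $\mathcal{F}^n$-completeness rather than from any $\mathcal{G}^n$-martingale property. Once that reduction is in place, the convexity check for $\phi$ and the interchange of infima are routine, and the Lagrangian computation of step three is classical.
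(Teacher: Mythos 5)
The paper does not actually prove this statement: it is imported wholesale from \cite{MR3954302}, so there is no internal proof to compare against. Your route --- dualize the inner maximization for fixed $Q_{n}$, observe that the dual functional is $E_{\tilde{P}_{n}}\left[\phi\left(\frac{dQ_{n}}{d\tilde{P}_{n}}\right)\right]$ with $\phi(y) = y\,\tilde{U}\!\left(\frac{\lambda}{y(1+r)^{n}}\right)$ a perspective-type convex function, invoke the minimality proposition to collapse the minimum over $\mathcal{E}^{\nu_{n}}$ to $P_{n}^{\nu_{n}}$ uniformly in $\lambda$, and then solve the resulting complete-market problem by the pointwise first-order condition --- is exactly the standard argument in this literature, and your convexity computation $\phi''(y) = \frac{c^{2}}{y^{3}}\tilde{U}''\!\left(\frac{c}{y}\right) \geq 0$ is correct. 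One small but real discrepancy: your derivation lands on $E_{P_{n}^{\nu_{n}}}\!\left[U\!\left(I\!\left(\frac{\Lambda_{n}(x)}{(1+r)^{n}}\frac{d\tilde{P}_{n}}{dP_{n}^{\nu_{n}}}\right)\right)\right]$, not on the displayed expression, in which the density sits outside $U$ and is missing from the argument of $I$; as printed, that expression is the constant $U\!\left(I\!\left(\frac{\Lambda_{n}(x)}{(1+r)^{n}}\right)\right)$ and cannot be the value function. You should say explicitly that you are recovering the corrected form rather than claiming to "recover the stated closed form."

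The one genuine gap is the obstacle you yourself flag and then leave unresolved: the identification of the attainable terminal wealths for $\Theta \in \mathcal{A}_{\mathcal{G}^{n}}(S^{n})$ with $\{V_{n} \geq 0 : E_{\tilde{P}_{n}}[V_{n}/(1+r)^{n}] = x\}$. The equality $\mathcal{G}_{n} = \mathcal{F}_{n}$ only gives measurability of $V_{n}$; it does not give the budget identity, because for a $\mathcal{G}^{n}$-predictable $\Theta_{m} \in \mathcal{G}_{m-1} = \mathcal{F}_{m-1} \vee \sigma(Y_{n})$ one has $E_{\tilde{P}_{n}}\!\left[\Theta_{m}\cdot\Delta\tilde{S}_{m}\right] = E_{\tilde{P}_{n}}\!\left[\Theta_{m}\cdot E_{\tilde{P}_{n}}[\Delta\tilde{S}_{m}\mid\mathcal{G}_{m-1}]\right]$, and the inner conditional expectation need not vanish since $\tilde{P}_{n}$ is only an $\mathcal{F}^{n}$-martingale measure. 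With genuinely $\mathcal{G}^{n}$-predictable strategies the attainable set from capital $x$ is strictly larger, and your step one fails as stated. The fix is not technical cleverness on your part but a reading of the definition: the continuous-time analogue in the paper (Definition 3.7) takes the supremum over $\mathcal{A}_{\mathcal{F}}(S)$, and the entire point of the weak-information framework is to avoid the enlarged filtration, so the $\mathcal{G}^{n}$ in Definition 2.11 should almost certainly be $\mathcal{F}^{n}$. You should state that you prove the theorem for $\mathcal{A}_{\mathcal{F}^{n}}(S^{n})$ and note the inconsistency, rather than leaving the reduction as an acknowledged but unclosed hole.
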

\end{subsection}
\begin{subsection}{Trinomial and Multinomial Transition Probabilities under Weak Anticipation}
In the next few sections we aim to prove that the random walk under weak anticipation is Markov in the trinomial and multinomial case.  In the trinomial case, we consider a sample sample space $\Omega = \{u, m, d\}$; the idea is that at each iteration of the $n$-th discrete time economy, the stock can either go "up," "down," or take a "middle" path, and we denote these three possibilities by u, d, m, respectively. \\
At each transition, the value of the risk-free asset rises by a factor of $(1+r)$, and the risky asset in the $k$-th period has payoffs $aS_{k-1}$ if the stock goes up in the case u, $cS_{k-1}$ if the stock goes down in the case d, and $bS_{k-1}$ moves by a factor in between a and c in the case m.  Using this notation, the transition probabilities satisfy the following.
\begin{proposition}
Let $l\in\{1,...,n-1\}$, $j\in\{0,...,n-l\}$, and $i\in\{0,...,n-l\}$. Then

\begin{align*}
\ P_{n}^{\nu_{n}}[S_{n-l+1}=aS_{n-l}\mid  &S_{n-l}=a^{i}b^{j}c^{n-i-j}s]\\
&= \frac{\sum\limits_{h=i}^{l+i+j-1} \sum\limits_{k=j}^{l+i+j-1-h}\frac{\binom{l - 1} {h - i - 1, k - j, l + i + j - h - k}}{\binom{n}{h, k, n-h-k}}}{\sum\limits_{h=i}^{l+i+j} \sum\limits_{k=j}^{l+i+j-h}\frac{\binom{l } {h - i, k - j, l + i + j - h - k}}{\binom{n}{h, k, n-h-k}}}\nu_{h,k}\\
\end{align*}
\end{proposition}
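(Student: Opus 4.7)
The idea is to first derive a closed form for $P_n^{\nu_n}(\omega)$ on individual paths in the trinomial model and then reduce the transition probability to a multinomial counting problem. Under any equivalent martingale measure $\tilde{P}_n$ on $\Omega_n = \{u,m,d\}^n$ the coordinates are i.i.d., so a single path $\omega$ with $h$ ups and $k$ middles has mass $p_u^h p_m^k p_d^{n-h-k}$, and summing over the $\binom{n}{h,k,n-h-k}$ paths of that terminal type recovers $\tilde{P}_n[Y_n=(h,k)]$. Consequently $\tilde{P}_n[\,\cdot \mid Y_n=(h,k)]$ is uniform on its fiber — independent of $(p_u,p_m,p_d)$, and therefore of the non-unique trinomial risk-neutral measure — and the defining sum for the minimal measure collapses to the single term
\[
P_n^{\nu_n}(\omega) \;=\; \frac{\nu_{h(\omega),k(\omega)}}{\binom{n}{h(\omega),k(\omega),n-h(\omega)-k(\omega)}},
\]
where $\nu_{h,k} := \nu_n[Y_n=(h,k)]$.

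Setting $B = \{S_{n-l} = a^i b^j c^{n-l-i-j}s\}$ and $A = \{S_{n-l+1} = aS_{n-l}\}$, I then partition $A \cap B$ and $B$ by terminal type $(h,k)$ and count. A path in $A \cap B$ of type $(h,k)$ has $(i,j,n-l-i-j)$ up/middle/down counts in its first $n-l$ coordinates (in any order), a $u$ at coordinate $n-l+1$, and $(h-i-1,\,k-j,\,l+i+j-h-k)$ counts in the last $l-1$ coordinates, so the number of such paths is
\[
\binom{n-l}{i,j,n-l-i-j}\binom{l-1}{h-i-1,\;k-j,\;l+i+j-h-k},
\]
and the analogous count for $B$ of type $(h,k)$ is $\binom{n-l}{i,j,n-l-i-j}\binom{l}{h-i,\;k-j,\;l+i+j-h-k}$. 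Weighting by the per-path mass from the previous step and summing over $(h,k)$ in the ranges that keep every multinomial argument nonnegative — precisely the ranges stated in the proposition — the factor $\binom{n-l}{i,j,n-l-i-j}$ appears identically in $P_n^{\nu_n}[A\cap B]$ and $P_n^{\nu_n}[B]$, so it cancels when forming the ratio $P_n^{\nu_n}[A \cap B]/P_n^{\nu_n}[B]$ and leaves exactly the announced formula.

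\textbf{Main obstacle.} The substantive step is the uniformity of $\tilde{P}_n[\,\cdot \mid Y_n]$ on its fibers: once this is in hand, both the clean form of $P_n^{\nu_n}$ and the disappearance of the ambient $\tilde{P}_n$ from the final answer become transparent, and everything else is multinomial bookkeeping. The only remaining care is to make sure the summation limits are right so that each factorial argument stays nonnegative — terms with a negative entry contribute zero by the standard convention and can be included harmlessly, which is why the numerator sum in the proposition can start at $h=i$ rather than $h=i+1$.
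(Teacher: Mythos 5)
Your proposal is correct and follows essentially the same route as the paper's proof: both decompose the events $B$ and $A\cap B$ over terminal types $(h,k)$, use the cancellation of the i.i.d.\ risk-neutral path weights $\tilde{p}_1^h\tilde{p}_2^k\tilde{p}_3^{n-h-k}$ to reduce each conditional probability to a ratio of multinomial path counts, and then form the quotient $P_n^{\nu_n}[A\cap B]/P_n^{\nu_n}[B]$. Your version is in fact slightly more careful in that you explicitly track the common factor $\binom{n-l}{i,j,n-l-i-j}$ and note that it cancels in the ratio, a step the paper's computation passes over silently.
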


\begin{proof} Note that
$P_{n}^{\nu_{n}}[S_{n-l+1} = aS_{n-l}\mid S_{n-l} = a^{i}b^{j}c^{n-l-i-j}s] = \frac{P_{n}^{\nu_{n}}[S_{n-l+1} = aS_{n-l}, S_{n-l} = a^{i}b^{j}c^{n-l-i-j}s]}{P_{n}^{\nu_{n}}[S_{n-l} = a^{i}b^{j}c^{n-l-i-j}s]}$
We compute
\begin{align*}
\ P_{n}^{\nu_{n}}[S_{n-l} = a^{i}b^{j}c^{n-l-i-j}s] &= \sum\limits_{h = 0}^{n} \sum\limits_{k=0}^{n}P_{n}[S_{N-l} = a^{i}b^{j}c^{n-l-i-j}s \mid S_{n} = a^{h}b^{k}c^{n-h-k}s]\nu_{h,k}\\
&= \sum\limits_{h = 0}^{n} \sum\limits_{k=0}^{n} \frac{P_{n}[S_{n-l} = a^{i}b^{j}c^{n-l-i-j}s, S_{n} = a^{h}b^{k}c^{n-h-k}s]}{P_{n}[S_{n} = a^{h}b^{k}c^{n-h-k}s]} \nu_{h,k}\\
&=\sum\limits_{h=i}^{l+i+j} \sum\limits_{k = j}^{l + i + j - h} \frac{\binom{h}{h-i,k-j,l+i+j-h-k}\tilde{p}_{1}^{h}\tilde{p}_{2}^{k}\tilde{p}_{3}^{n-h-k}}{\binom{n}{h, k, N-h-k}\tilde{p}_{1}^{h}\tilde{p}_{2}^{k}\tilde{p}_{3}^{n-h-k}} \nu_{h,k}\\
&= \sum\limits_{h=i}^{l+i+j} \sum\limits_{k = j}^{l + i + j - h} \frac{\binom{h}{h-i,k-j,l+i+j-h-k}}{\binom{n}{h, k, n-h-k}} \nu_{h,k}
\end{align*}

where $\tilde{p}_{1}, \tilde{p}_{2}, \tilde{p}_{3}$ represent the risk neutral probabilities and we have used the fact that we need $i \leq h, j \leq k, n-l-i-j \leq n-h-k$ for the numerators to be nonzero.  Next, 

$P_{n}^{\nu_{n}}[S_{n-l+1} = aS_{n-l}, S_{n-l} = a^{i}b^{j}c^{n-l-i-j}s]  = $
\begin{align*}
 ... \\ &= \sum\limits_{h=0}^{n} \sum\limits_{k=0}^{n} \frac{P_{n}[S_{n-l+1} = aS_{n-l}, S_{n-l} = a^{i}b^{j}c^{n-l-i-j}s, S_{n} = a^{h}b^{k}c^{n-h-k}s]}{P_{n}[S_{n} = a^{h}b^{k}c^{n-h-k}s]} \nu_{h,k} \\ &= \sum\limits_{h = i}^{l + i + j - 1} \sum\limits_{k = j}^{l + i + j - 1 - h} \frac{\binom{l-1}{h-i-1, k-j, l+i+j-h-k}}{\binom{n}{h,k,N-h-k}} \nu_{h,k}\\
\end{align*}
\\
Thus, \\
$P_{n}^{\nu_{n}}[S_{n-l+1} = aS_{n-l} \mid S_{n-l} = a^{i}b^{j}c^{n-l-i-j}s] = \frac{\sum\limits_{h = i}^{l + i + j - 1} \sum\limits_{k = j}^{l + i + j - 1 - h} \frac{\binom{l-1}{h-i-1, k-j, l+i+j-h-k}}{\binom{n}{h, k, n-h-k}}}{\sum\limits_{h=i}^{l+i+j} \sum\limits_{k = j}^{l + i + j - h} \frac{\binom{h}{h-i,k-j,l+i+j-h-k}}{\binom{n}{h,k,n-h-k}}} \nu_{h,k}$
\\
\\
\end{proof}
We can use similar techniques to prove the following.
\begin{proposition}
Let $l\in\{1,...,n-1\}$, and $i_{j}\in\{0,...,n-l\}$ where $\sum_{1}^{k} i_{j} = n-l$. Then

\begin{align*}
\ P_{n}^{\nu_{n}}[S_{n-l+1}=a_{1}S_{n-l}\mid  &S_{n-l}=a_{1}^{i_{1}} \cdots a_{k}^{i_{k}}s]\\
&= \frac{\sum\limits_{\forall x \in [1,k], i_{x} \leq j_{x} \leq l - 1 - \sum\limits_{y = 1}^{k} i_{y} - \sum\limits_{y=1}^{x-1} j_{y}} \frac{\binom{l - 1} {j_{1} - i_{1} - 1, j_{2} - i_{2}, ... , j_{k} - i_{k}}}{\binom{n}{j_{1}, ..., j_{k}}}}{\sum\limits_{\forall x \in [1,k], i_{x} \leq j_{x} \leq l - 1 - \sum\limits_{y = 1}^{k} i_{y} - \sum\limits_{y=1}^{x-1} j_{y}} \frac{\binom{l } {j_{1} - i_{1}, j_{2} - i_{2}, ... , j_{k} - i_{k}}}{\binom{n}{j_{1}, ..., j_{k}}}}\nu_{j_{1},...,j_{k}}\\
\end{align*}
\end{proposition}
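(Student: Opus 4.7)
My plan is to mirror the proof of Proposition 2.10, replacing the two-index sums by the $k$-index sums that the multinomial setting demands. I would begin by writing
\[
P_n^{\nu_n}[S_{n-l+1}=a_1S_{n-l}\mid S_{n-l}=a_1^{i_1}\cdots a_k^{i_k}s] = \frac{P_n^{\nu_n}[S_{n-l+1}=a_1S_{n-l},\, S_{n-l}=a_1^{i_1}\cdots a_k^{i_k}s]}{P_n^{\nu_n}[S_{n-l}=a_1^{i_1}\cdots a_k^{i_k}s]},
\]
and then expanding both factors using the defining formula for $P_n^{\nu_n}$, i.e.\ by disintegrating against the terminal state $S_n=a_1^{j_1}\cdots a_k^{j_k}s$ with weight $\nu_{j_1,\ldots,j_k}$.

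For each fixed terminal profile $(j_1,\ldots,j_k)$, I would compute $\tilde{P}_n[\,\cdot\mid S_n=a_1^{j_1}\cdots a_k^{j_k}s]$ as the ratio of a joint probability to the marginal $\tilde{P}_n[S_n=a_1^{j_1}\cdots a_k^{j_k}s]$. Under $\tilde{P}_n$ the increments are i.i.d.\ with risk-neutral probabilities $\tilde{p}_1,\ldots,\tilde{p}_k$, so every path of length $n$ ending at $a_1^{j_1}\cdots a_k^{j_k}s$ carries probability $\tilde{p}_1^{j_1}\cdots\tilde{p}_k^{j_k}$, and the marginal equals $\binom{n}{j_1,\ldots,j_k}\tilde{p}_1^{j_1}\cdots\tilde{p}_k^{j_k}$. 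Consequently, the risk-neutral factors cancel in every term of the disintegration, removing all dependence on $\tilde{P}_n$ from the final expression.

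The remaining work is purely combinatorial. Conditioned on passing through $a_1^{i_1}\cdots a_k^{i_k}s$ at time $n-l$, the first $n-l$ steps contribute the factor $\binom{n-l}{i_1,\ldots,i_k}$; since this factor is common to numerator and denominator of the outer ratio, it cancels. The final $l$ steps contribute $\binom{l}{j_1-i_1,\ldots,j_k-i_k}$ in the denominator. In the numerator, the step at time $n-l+1$ is forced to be an $a_1$-step, leaving $l-1$ free steps which contribute $\binom{l-1}{j_1-i_1-1,j_2-i_2,\ldots,j_k-i_k}$. Dividing each summand by $\binom{n}{j_1,\ldots,j_k}$ (coming from the marginal at $S_n$) and weighting by $\nu_{j_1,\ldots,j_k}$ in both numerator and denominator then produces the displayed formula.

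The main bookkeeping obstacle is pinning down the correct ranges of the nested $k$-fold sum. One needs $j_x\ge i_x$ for every $x$ (and $j_1\ge i_1+1$ in the numerator) so that each multinomial coefficient is nonzero, together with $\sum_x(j_x-i_x)=l$ (respectively $l-1$) so that the indices describe a valid path of length $l$ (resp.\ $l-1$). Translating these joint constraints into sequential bounds of the form $i_x\le j_x\le$ (some expression in $j_1,\ldots,j_{x-1}$) is routine but easy to get wrong; once it is done, the algebraic simplification proceeds exactly as in the proof of Proposition 2.10.
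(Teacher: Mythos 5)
Your proposal follows essentially the same route as the paper's proof: write the conditional probability as a ratio, disintegrate both numerator and denominator against the terminal state $S_n=a_1^{j_1}\cdots a_k^{j_k}s$ with weights $\nu_{j_1,\ldots,j_k}$, cancel the risk-neutral probabilities $\tilde{p}_1^{j_1}\cdots\tilde{p}_k^{j_k}$ against the marginal, and count paths with multinomial coefficients $\binom{l}{j_1-i_1,\ldots,j_k-i_k}$ and $\binom{l-1}{j_1-i_1-1,j_2-i_2,\ldots,j_k-i_k}$. Your explicit observation that the common factor $\binom{n-l}{i_1,\ldots,i_k}$ from the first $n-l$ steps cancels between numerator and denominator is a point the paper passes over silently, and is worth retaining; otherwise the argument, including the sequential index-range bookkeeping you defer, is the one the paper gives.
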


\begin{proof} Note that
$P_{n}^{\nu_{n}}[S_{n-l+1} = a_{1}S_{n-l}\mid S_{n-l} = a_{1}^{i_{1}} \cdots a_{k}^{i_{k}}s] = \frac{P_{n}^{\nu_{n}}[S_{n-l+1} = a_{1}S_{n-l}, S_{n-l} = a_{1}^{i_{1}} \cdots a_{k}^{i_{k}}s]}{P_{n}^{\nu_{n}}[S_{n-l} = a_{1}^{i_{1}} \cdots a_{k}^{i_{k}}s]}$
We compute
\begin{align*}
\ P_{n}^{\nu_{n}}[S_{n-l} = a_{1}^{i_{1}} \cdots a_{k}^{i_{k}}s] &= \sum\limits_{j_{1} = 0}^{n} \cdots \sum\limits_{j_{k} = 0}^{n}P_{n}[S_{n-l} = a_{1}^{i_{1}} \cdots a_{k}^{i_{k}}s \mid S_{n} = a_{1}^{j_{1}} \cdots a_{k}^{j_{k}}s]\nu_{j_{1},...,j_{k}}\\
&= \sum\limits_{j_{1} = 0}^{n} \cdots \sum\limits_{j_{k} = 0}^{n} \frac{P_{n}[S_{n-l} = a_{1}^{i_{1}} \cdots a_{k}^{i_{k}}s, S_{n} = a_{1}^{j_{1}} \cdots a_{k}^{j_{k}}s]}{P_{n}[S_{n} = a_{1}^{j_{1}} \cdots a_{k}^{j_{k}}s]} \nu_{j_{1},...,j_{k}}\\
&=\sum\limits_{\forall x \in [1,k], i_{x} \leq j_{x} \leq l + \sum\limits_{y=1}^{k} i_{y} - \sum\limits_{y=1}^{x-1} j_{y}} \frac{\binom{l}{j_{1} - i_{1},...,j_{k} - i_{k}}\tilde{p}_{1}^{j_{1}}\cdots\tilde{p}_{k}^{j_{k}}}{\binom{n}{j_{1},...,j_{k}}\tilde{p}_{1}^{j_{1}}\cdots\tilde{p}_{k}^{j_{k}}} \nu_{j_{1},...,j_{k}}\\
&= \sum\limits_{\forall x \in [1,k], i_{x} \leq j_{x} \leq l + \sum\limits_{y=1}^{k} i_{y} - \sum\limits_{y=1}^{x-1} j_{y}} \frac{\binom{l}{j_{1} - i_{1},...,j_{k} - i_{k}}}{\binom{n}{j_{1},...,j_{k}}} \nu_{j_{1},...,j_{k}}
\end{align*}

where $\tilde{p}_{1}, ..., \tilde{p}_{k}$ represent the risk neutral probabilities and we have used the fact that we need $i_{x} \leq j_{x}, n - l - \sum\limits_{x=1}^{k-1} i_{x} \leq n - \sum\limits_{x=1}^{k-1} j_{x}$ for the numerators to be nonzero.  Next, 

$P_{n}^{\nu_{n}}[S_{n-l+1} = a_{1}S_{n-l}, S_{n-l} = a_{1}^{i_{1}} \cdots a_{k}^{i_{k}}s]  = $
\begin{align*}
 ... \\ &= \sum\limits_{j_{1} = 0}^{n} \cdots \sum\limits_{j_{k} = 0}^{n} \frac{P_{n}[S_{n-l+1} = a_{1}S_{n-l}, S_{n-l} = a_{1}^{i_{1}}\cdots a_{k}^{i_{k}}s, S_{n} = a_{1}^{j_{1}} \cdots a_{k}^{j_{k}}s]}{P_{n}[S_{n} = a_{1}^{j_{1}} \cdots a_{k}^{j_{k}}s]} \nu_{j_{1},...,j_{k}} \\ &= \sum\limits_{\forall x \in [1,k], i_{x} \leq j_{x} \leq l - 1 - \sum\limits_{y=1}^{k} i_{y} - \sum\limits_{y=1}^{x-1} j_{y}} \frac{\binom{l-1}{j_{1} - i_{1} - 1,...,j_{k} - i_{k}}}{\binom{n}{j_{1},...,j_{k}}} \nu_{j_{1},...,j_{k}}\\
\end{align*}
\\
Thus, \\
$P_{n}^{\nu_{n}}[S_{n-l+1} = a_{1}S_{n-l} \mid S_{n-l} = a_{1}^{i_{1}} \cdots a_{k}^{i_{k}}s] = \frac{\sum\limits_{\forall x \in [1,k], i_{x} \leq j_{x} \leq l - 1 - \sum\limits_{y=1}^{k} i_{y} - \sum\limits_{y=1}^{x-1} j_{y}} \frac{\binom{l}{j_{1} - i_{1},...,j_{k} - i_{k}}}{\binom{n}{j_{1},...,j_{k}}}}{\sum\limits_{\forall x \in [1,k], i_{x} \leq j_{x} \leq l - 1 - \sum\limits_{y=1}^{k} i_{y} - \sum\limits_{y=1}^{x-1} j_{y}} \frac{\binom{l-1}{j_{1} - i_{1} - 1,...,j_{k} - i_{k}}}{\binom{n}{j_{1},...,j_{k}}}} \nu_{j_{1},...,j_{k}}$
\\
\\
\end{proof}
\end{subsection}
\begin{subsection}{Markov Property of Trinomial Random Walk under Weak Anticipation}
\begin{proposition}
The process $(S_{k})_{0 \leq k \leq n}$ is Markov under $P_{n}^{\nu_{n}}$
\end{proposition}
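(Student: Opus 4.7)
The plan is to verify the Markov property directly from the definition of $P_n^{\nu_n}$. Fix $l \in \{1, \ldots, n\}$ and fix any specific prefix trajectory $(s_0, s_1, \ldots, s_{n-l})$ with $s_0 = s$ and $s_{n-l} = a^{i}b^{j}c^{n-l-i-j}s$, i.e.\ consisting of $i$ up-moves, $j$ middle-moves and $n-l-i-j$ down-moves in some specific order. I would show that
$P_n^{\nu_n}[S_{n-l+1} = a S_{n-l} \mid S_0 = s_0, \ldots, S_{n-l} = s_{n-l}]$
agrees with the one-step conditional probability computed in Proposition 2.2.1 (and likewise for the middle and down transitions); this will give $P_n^{\nu_n}[S_{n-l+1} \in \cdot \mid \mathcal{F}_{n-l}] = P_n^{\nu_n}[S_{n-l+1} \in \cdot \mid \sigma(S_{n-l})]$, which is the Markov property.

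The key input is exchangeability of paths under the risk-neutral measure. Under $\tilde{P}_n$ the one-step moves are independent with probabilities $\tilde{p}_1, \tilde{p}_2, \tilde{p}_3$, so every $n$-step trajectory with $h$ up, $k$ middle, and $n-h-k$ down moves has probability $\tilde{p}_1^h \tilde{p}_2^k \tilde{p}_3^{n-h-k}$, independent of the order of the moves. Hence conditional on $Y_n = a^h b^k c^{n-h-k}s$, the $\binom{n}{h,k,n-h-k}$ paths reaching this endpoint are uniformly distributed. Substituting this into $P_n^{\nu_n}(\omega) = \sum_{h,k} \tilde{P}_n[\omega \mid Y_n = a^h b^k c^{n-h-k}s]\,\nu_{h,k}$ and counting multinomially how many full paths extend the given prefix, respectively the prefix followed by an up-move, yields
\begin{align*}
P_n^{\nu_n}[S_0 = s_0, \ldots, S_{n-l} = s_{n-l}] &= \sum_{h,k} \frac{\binom{l}{h-i,\, k-j,\, l+i+j-h-k}}{\binom{n}{h,\, k,\, n-h-k}}\, \nu_{h,k}, \\
P_n^{\nu_n}[S_0 = s_0, \ldots, S_{n-l}=s_{n-l},\, S_{n-l+1}=aS_{n-l}] &= \sum_{h,k} \frac{\binom{l-1}{h-i-1,\, k-j,\, l+i+j-h-k}}{\binom{n}{h,\, k,\, n-h-k}}\, \nu_{h,k},
\end{align*}
where the sums are taken over indices making all entries of the multinomial coefficients non-negative.

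Crucially, both right-hand sides depend on the chosen prefix only through the counts $(i,j)$, and hence only through $S_{n-l}$. Taking the ratio thus reproduces exactly the formula of Proposition 2.2.1, showing that the conditional distribution of $S_{n-l+1}$ given $\mathcal{F}_{n-l}$ coincides with the one given $\sigma(S_{n-l})$. The same argument with the up-move replaced by a middle-move or a down-move handles the remaining two transition probabilities, completing the proof. The main work is the combinatorial bookkeeping of path completions, which is already carried out in the proof of Proposition 2.2.1; the only additional ingredient is that the exchangeability of $\tilde{P}_n[\,\cdot\,\mid Y_n = y]$ makes this count insensitive to the order in which the past up, middle, and down moves occurred, and this insensitivity is exactly what produces the Markov property.
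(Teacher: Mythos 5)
Your proof is correct, but it takes a genuinely different route from the paper's. The paper's own argument sets $T_{k}=\log S_{k}$, computes the conditional Laplace transform $E_{P_{n}^{\nu_{n}}}[e^{-\lambda(T_{k+1}-T_{k})}\mid\mathcal{F}_{k}]=a^{-\lambda}\tilde{p}_{1}+b^{-\lambda}\tilde{p}_{2}+c^{-\lambda}\tilde{p}_{3}$, and concludes that the increments are i.i.d.\ trinomial, whence $S$ is Markov. You instead condition on an entire path prefix and exploit the exchangeability of paths under $\tilde{P}_{n}[\,\cdot\mid Y_{n}=y]$ (all $\binom{n}{h,k,n-h-k}$ trajectories reaching a given endpoint are equally likely), so that the probability of any prefix, and of any prefix followed by a given one-step move, depends on the prefix only through the move counts $(i,j)$; the ratio is then a function of $S_{n-l}$ and $l$ alone, which is precisely the Markov property. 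Your route is the more robust of the two: it is consistent with Proposition 2.2.1, where the one-step transition probabilities under $P_{n}^{\nu_{n}}$ visibly depend on the time index $l$ and the state $(i,j)$, so the increments cannot in general be i.i.d.\ under the anticipation measure --- they are i.i.d.\ with probabilities $\tilde{p}_{1},\tilde{p}_{2},\tilde{p}_{3}$ only under $\tilde{P}_{n}$ itself. The paper's Laplace-transform step in effect evaluates the conditional expectation under $\tilde{P}_{n}$ rather than under $P_{n}^{\nu_{n}}$, so it establishes Markovianity only in the untilted case, whereas your path-counting argument proves the proposition as stated for general $\nu_{n}$; the price you pay is redoing the combinatorial bookkeeping of Proposition 2.2.1 with the full past in place of the current state, which is exactly where the exchangeability observation does the work.
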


\begin{proof} Let $T_{k} = \log(S_{k})$.  Then, since 
$\frac{S_{k+1}}{S_{k}} = \begin{cases}
a, \text{when } \omega_{k+1} = u & p = \tilde{p}_{1}\\
b, \text{when } \omega_{k+1} = m & p = \tilde{p}_{2}\\
c, \text{when } \omega_{k+1} = d & p = \tilde{p}_{3}
\end{cases}$
where $(\omega_{1},...,\omega_{k+1}) \in \left\{u, m, d\right\}^{k+1}$.  Thus,
\begin{align*}
\ E_{P_{n}^{\nu_{n}}}[e^{-\lambda(T_{k+1} - T_{k})} | \mathcal{F}_{k}] &= E_{P_{n}^{\nu_{n}}}[e^{-\lambda \log \left(\frac{S_{k+1}}{S_{k}}\right)} | \mathcal{F}_{k}] \\ &= a^{-\lambda}\tilde{p_{1}} + b^{-\lambda}\tilde{p}_{2} + c^{-\lambda}\tilde{p}_{3}\\
\end{align*}
Using inverse Laplace transforms we see $(T_{k+1} - T_{k})_{0 \leq k \leq n}$ are i.i.d. trinomial, and $\{T_{k}\}_{k \geq 1}$ is Markov.  Thus $(\frac{S_{k+1}}{S_{k}})_{0 \leq k \leq n}$ are i.i.d. and $\left(S_{k}\right)_{0 \leq k \leq n}$ is Markov.
\end{proof}
\end{subsection}
\end{section}

\begin{section}{Weak Anticipation and the Financial Value of Weak Information in the Continuous-Time Economy}
\begin{subsection}{Mathematical Framework}
Let $T > 0$ be a constant finite time horizon.  In this section we consider continuous-time, arbitrage-free economies of the form
\begin{align*}
(\Omega, \mathcal{H} = (\mathcal{H}_{t})_{0 \leq t \leq T}, P, S = (S_{t})_{0 \leq t \leq T})
\end{align*}
which constitutes a sample space $\Omega$, a filtration $\mathcal{H}$ that satisfies the usual conditions, a probability measure $P$.  When studying the strong information case, $\mathcal{H}$ at times represents the public information flow $\mathcal{F}$, and at times represent the enlarged filtration of the informed insider $\mathcal{G}$.  In either the strong or weak case, we assume the price process $S$ consists of d tradable assets, is an $\mathcal{F}$-adapted local martingale, i.e. $S_{t} \in \mathbb{R}^{d}$ and $S_{t} \in \mathcal{F}_{t}$ for each $0 \leq t \leq T$.  In addition, we assume $S$ is square-integrable and that $\mathcal{F}_{0}$ is trivial, which implies that $S_{0}$ is constant. \\ 
In the strong information case, we start with a Polish space $\mathcal{P}$, endowed with its Borel $\sigma$-algebra $\mathcal{B}(\mathcal{P})$, and a random variable $Y: \Omega \rightarrow \mathcal{P}$ which is $\mathcal{F}_{T}$-measurable.  Then we denote the information flow of the informed insider by filtration $\mathcal{G} = (\mathcal{G}_{t})_{0 \leq t < T}$ where $\mathcal{G}_{t} = \underset{\epsilon > 0}{\bigcap}(\mathcal{F}_{t + \epsilon} \vee \sigma(Y))$ for each $t < T$. \\
It is standard in the literature of strong information, see for example \cite{MR2021790} to denote $P_{Y}$ by the law of $Y$ and assume that $Y$ admits a regular disintegration with respect to the filtration $\mathcal{F}$.  So, the following assumption is made.
\begin{assumption}
There exists a jointly measurable, continuous in t, and $\mathcal{F}$-adapted process
\begin{align*}
\eta_{t}^{y}, \text{ } 0 \leq t < T, \text{ } y \in \mathcal{P},
\end{align*}
satifying for all $dt \bigotimes P_{Y}$ almost every $0 \leq t < t$ and $y \in \mathcal{P}$,
\begin{align*}
P[Y \in dy | \mathcal{F}_{t}] = \eta_{t}^{y} P[Y \in y]
\end{align*}
\end{assumption}
This assumption is not too restrictive, and will be satisfied for "nice" functionals, to be seen.  The existence of a conditional density $\frac{P[Y \in dy | \mathcal{F}_{t}]}{P[Y \in y]}$ is the main point, the existence of a regular version follows from general results on stochastic processes.
\begin{remark}
If we denote by $P^{y}$ the disintegrated probability measure defined by $P^{y} = P[\cdot | Y = y]$, then the above assumption implies that for $t < T$,
\begin{align*}
P_{/\mathcal{F}_{t}}^{y} = \eta_{t}^{y}P_{/\mathcal{F}_{t}}
\end{align*}
In particular, for $P_{Y}$-a.e. $y \in \mathcal{P}$, the process $(\eta_{t}^{y})_{0 \leq t < T}$ is a martingale in the filtration $\mathcal{F}$ (not uniformaly integrable).
\end{remark}
However, the aim of this paper is to replace this regular distintegration which tells us the anticipation path-by-path with the distribution of the anticipation, which characterizes the weak information case.  This leads us to introduce the analogous continuous-time versions of objects introduced in the second section of this paper.
\begin{definition}
The space $\mathcal{M}_{\mathcal{F}}(S)$ of martingale measures is the set of probability measures $\tilde{P} \sim P$ such that $(S_{t})_{0 \leq t \leq T}$ is an $\mathcal{F}$-adapted local martingale under $\tilde{P}$.
\end{definition}
\begin{definition}
The space $\mathcal{A}_{\mathcal{F}}(S)$ of admissible strategies is the space of $\mathbb{R}^{d}$-valued and $\mathcal{F}$ predictable processes $\Theta$ integrable with respect to the price process $S$, such that
\begin{align*}
\left(\int_{0}^{t} \Theta_{u} \cdot dS_{u} \right)_{0 \leq t \leq T}
\end{align*}
is a $(\tilde{P}, \mathcal{F})$ martingale for all $\tilde{P} \in \mathcal{M}_{\mathcal{F}}(S)$.
\end{definition}
Note that $\Theta_{t}^{i}$ is the number of shares of the risky asset $S_{t}^{i}$ held by an investor at time $t$, and the wealth process associated with the strategy $\Theta \in \mathcal{A}_{\mathcal{F}}(S)$, with initial capital x, is given by
\begin{align*}
V_{t} = x + \int_{0}^{t} \Theta_{u} \cdot dS_{u}
\end{align*}
In particular, we assume our strategies are self-financing. \\
We shall also often assume that the financial market
\begin{align*}
(\Omega, (\mathcal{F}_{t})_{0 \leq t \leq T}, P, (S_{t})_{0 \leq t \leq T})
\end{align*}
is complete in the sense that the martingale $(S_{t})_{0 \leq t \leq T}$ enjoys the following predictable representation property (PRP): For each $\mathcal{F}$-adapted local martingale $(M_{t})_{0 \leq t \leq T}$ there exists a unique predictable $\Theta$ locally in $L^{2}$ such that
\begin{align*}
M_{t} = M_{0} + \int_{0}^{t} \Theta_{u} \cdot dS_{u} \text{, for each } t \leq T
\end{align*}
\begin{remark}
Under the previous assumption, we have $\mathcal{M}_{\mathcal{F}}(S) = \{\tilde{P}\}$.
\end{remark}
Now let $\nu$ be a probability measure on $\mathcal{B}(\mathcal{P})$ corresponding to the anticipation of the informed insider.  We assume $\nu \sim P_{Y}$ with a bounded density $\xi$.  Intuititively this means that the informed investor does not have information which is "opposite" to the market. \\
Recall that the minimal information is $\nu = P_{Y}$ because under $P$ the price process $(S_{t})_{0 \leq t \leq T}$ is a local martingale and the maximal information is obtained in the limit with $\nu = \delta_{y}$ for $y \in \mathcal{P}$.
Let $\mathcal{E}^{\nu}$ be the set of probability measure $Q$ on $(\Omega, \mathcal{F}_{T})$ such that:
\begin{enumerate}
	\item $Q \sim P$
	\item $Q_{Y} = \nu$
\end{enumerate}
The financial market associated with an element $Q \in \mathcal{E}^{\nu}$ is 
\begin{align*}
(\Omega, (\mathcal{F}_{t})_{0 \leq t \leq T}, Q, (S_{t})_{0 \leq t \leq T})
\end{align*}
It is clear there is no arbitrage on this market because $Q \sim P$ and $S$ is a local martingale under $P$.  The portfolio optimization problem associated with this market is
\begin{align*}
\underset{\Theta \in \mathcal{A}_{\mathcal{F}}(S)}{\sup} E_{Q}\left[U\left(x + \int_{0}^{t} \Theta_{u} \cdot dS_{u}\right)\right]
\end{align*}
where $x > 0$ is the initial investment of the insider.
\begin{definition}
We define the \textbf{financial value of weak information} $(Y, \nu)$ for an insider with initial investment $x > 0$ as
\begin{align*}
u(x, \nu) := \underset{Q \in \mathcal{E}^{\nu}}{\inf}{\underset{\Theta \in \mathcal{A}_{\mathcal{F}}(S)}{\sup}}E_{Q}\left[U\left(x + \int_{0}^{T} \Theta_{u} \cdot dS_{u} \right)\right].
\end{align*}
\end{definition}
$u(x, \nu)$ is then the minimal gain in utility associated with the anticipation $(Y, \nu)$.  It is shows in \cite{MR2213259} that we always have $u(x, \nu) \geq U(x)$ and equality takes place for $\nu = P_{Y}$. \\
Remember we assume that S has the PRP.  We now definte the minimal probability associated with weak information.
\begin{definition}
The probability measure $P^{\nu}$ defined on $(\Omega, \mathcal{F}_{T})$ by
\begin{align*}
P^{\nu}(A) := \int_{\mathcal{P}} P[A | Y = y] \nu(dy) \text{, for all } A \in \mathcal{F}_{T}
\end{align*}
is called the \textbf{minimal probability associated with the weak information} $(Y, \nu)$.
\end{definition}
Intuitively, $P^{\nu}$ is constructed in order that for events A which are $P$-independent of $Y$ we have $P^{\nu}(A) = P(A)$.  Some immediate consequences of this definiton are:
\begin{enumerate}
	\item The law of $Y$ under $P^{\nu}$ is $\nu$
	\item We have the following relationship
		\begin{align*}
			dP^{\nu} = \xi(Y)dP \text{ recall } \xi = \frac{d\nu}{dP_{Y}}
		\end{align*}
\end{enumerate}
By Proposition of \cite{MR2213259}, $P^{\nu}$ is minimal amongst probability measures $Q \in \mathcal{E}^{\nu}$ on $\Omega$ which are equivalent to $P$ and such that the law of $Y$ under $Q$ is $\nu$.  In particular, we have the following.
\begin{theorem}
If $\phi$ is a convex function, then 
\begin{align*}
\underset{Q \in \mathcal{E}^{\nu}}{\inf}E_{P}\left[\phi\left(\frac{dQ}{dP}\right)\right] = E_{P}\left[\phi\left(\frac{dP^{\nu}}{dP}\right)\right] 
\end{align*}
\end{theorem}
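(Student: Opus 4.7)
The plan is to verify first that $P^{\nu} \in \mathcal{E}^{\nu}$, and then to deduce minimality by a conditional Jensen argument, in direct analogy with the discrete-time Proposition 2.5 of the excerpt.

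First I would check that $P^{\nu}$ actually belongs to $\mathcal{E}^{\nu}$. The relation $dP^{\nu} = \xi(Y)\, dP$ with $\xi = d\nu/dP_{Y}$ gives $P^{\nu} \sim P$: since $\nu \sim P_{Y}$, $\xi > 0$ $P_{Y}$-a.s., so $\xi(Y) > 0$ $P$-a.s. A direct computation shows that for every Borel $A \subseteq \mathcal{P}$,
\[
P^{\nu}(Y \in A) = E_{P}[\xi(Y)\mathbf{1}_{A}(Y)] = \int_{A}\xi\, dP_{Y} = \nu(A),
\]
so $(P^{\nu})_{Y} = \nu$.

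The key identity behind the theorem is the following: for every $Q \in \mathcal{E}^{\nu}$ with density $L := dQ/dP$, we have $E_{P}[L \mid \sigma(Y)] = \xi(Y) = dP^{\nu}/dP$ $P$-a.s. Indeed, the requirement $Q_{Y} = \nu$ says that for every Borel $A \subseteq \mathcal{P}$,
\[
E_{P}[L\,\mathbf{1}_{A}(Y)] = Q(Y\in A) = \nu(A) = E_{P}[\xi(Y)\,\mathbf{1}_{A}(Y)],
\]
and since $\sigma(Y)$ is generated by events of the form $\{Y \in A\}$, this pins down $E_{P}[L \mid \sigma(Y)] = \xi(Y)$.

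Given this identification, the theorem follows from conditional Jensen's inequality applied to the convex function $\phi$: for any $Q \in \mathcal{E}^{\nu}$,
\[
E_{P}[\phi(L)] = E_{P}\bigl[E_{P}[\phi(L) \mid \sigma(Y)]\bigr] \geq E_{P}\bigl[\phi\bigl(E_{P}[L \mid \sigma(Y)]\bigr)\bigr] = E_{P}\Bigl[\phi\Bigl(\tfrac{dP^{\nu}}{dP}\Bigr)\Bigr].
\]
Taking the infimum over $Q \in \mathcal{E}^{\nu}$ yields one inequality, while $P^{\nu} \in \mathcal{E}^{\nu}$ gives the reverse.

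There is no real obstacle here; the only subtlety is integrability. If $\phi(L)$ fails to be $P$-integrable, the left-hand side is $+\infty$ and the inequality is trivial, so the infimum is unaffected. The assumed boundedness of $\xi$ ensures that $dP^{\nu}/dP$ is a bounded random variable, and so $E_{P}[\phi(dP^{\nu}/dP)]$ is finite whenever $\phi$ is locally bounded, which guarantees that the right-hand side of the claimed equality is well-defined.
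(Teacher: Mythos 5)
Your proof is correct. The paper itself does not prove this theorem---it defers to the corresponding proposition in Baudoin--Nguyen-Ngoc---but your argument (verify $P^{\nu}\in\mathcal{E}^{\nu}$, identify $E_{P}[\,dQ/dP \mid \sigma(Y)] = \xi(Y) = dP^{\nu}/dP$ for every $Q\in\mathcal{E}^{\nu}$, then apply conditional Jensen and note the infimum is attained at $P^{\nu}$) is precisely the standard proof of that cited result, and your handling of integrability is sound since a convex $\phi$ is bounded below by an affine function, so $E_{P}[\phi(dQ/dP)]$ is always well defined in $(-\infty,+\infty]$.
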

Theorem 19 of \cite{MR2021790} gives us a closed form expression for the financial value of weak information.  Namely, we have the following result.
\begin{theorem}
Assume that the integrals below are convergent.  Then for each initial investment $x > 0$,
\begin{align*}
\ u(x,\nu) & = \underset{\Theta \in \mathcal{A}(S)}{\sup} E_{P^{\nu}} \left[U \left(x + \int_{0}^{T} \Theta_{u} dS_{u} \right) \right]
\\ &= \int_{\mathcal{P}} (U \circ I) \left (\frac{\Lambda(x)}{\xi(y)} \right) \nu (dy)
\end{align*}
where $\Lambda(x)$ is defined by 
\begin{align*}
\int_{\mathcal{P}} I \left(\frac{\Lambda(x)}{\xi(y)} \right) P_{Y}(dy) = x
\end{align*}
Moreover, under $P^{\nu}$ the optimal wealth process is given by 
\begin{align*}
V_{t} = \int_{\mathcal{P}} I \left(\frac{\Lambda(x)}{\xi(y)} \right) \eta_{t}^{y} P_{Y}(dy)
\end{align*}
and the corresponding number of parts invested in the risky asset S by 
\begin{align*}
\Theta_{t} = \int_{\mathcal{P}} I \left(\frac{\Lambda(x)}{\xi(y)} \right) \eta_{t}^{y} \alpha_{t}^{y} P_{Y}(dy)
\end{align*}
where we can choose $\alpha$ such that for $P_{Y}$ a.e. $y \in \mathcal{P}$ and for $0 \leq t < T$,
\begin{align*}
\eta_{t}^{y} = \exp \left( \int_{0}^{t} \alpha_{u}^{y} \cdot dS_{u} - \frac{1}{2} \int_{0}^{t} (\alpha_{u}^{y})^{*} d\left\langle S \right\rangle_{u} \alpha_{u}^{y} \right) \text{ on } \{\eta_{t}^{y} > 0\}
\end{align*}
\end{theorem}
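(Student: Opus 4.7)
The plan is to verify the two equalities and the representations for $V_t$ and $\Theta_t$ in sequence. Completeness (the PRP of $S$ under $P$) together with the standing hypothesis that $S$ is a $P$-local martingale gives $\mathcal{M}_{\mathcal{F}}(S)=\{P\}$, so the admissible terminal wealths from initial capital $x$ are exactly the $\mathcal{F}_{T}$-measurable $V_{T}$ satisfying $E_{P}[V_{T}]=x$.

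First, I would establish $u(x,\nu)=\sup_{\Theta}E_{P^{\nu}}[U(V_{T})]$ via convex duality. For each fixed $Q\in\mathcal{E}^{\nu}$ with density $Z=dQ/dP$, the classical complete-market dual formula gives
\begin{align*}
\sup_{\Theta\in\mathcal{A}_{\mathcal{F}}(S)}E_{Q}[U(V_{T})]=\inf_{y>0}\bigl(yx+E_{P}[Z\,\tilde U(y/Z)]\bigr).
\end{align*}
Using the identity $\tilde U'(w)=-I(w)$, direct differentiation shows that $z\mapsto z\,\tilde U(y/z)$ is convex on $(0,\infty)$ for each $y>0$. The preceding minimality theorem applied with this convex function then yields
\begin{align*}
\inf_{Q\in\mathcal{E}^{\nu}}E_{P}\bigl[Z\,\tilde U(y/Z)\bigr]=E_{P}\bigl[\xi(Y)\,\tilde U(y/\xi(Y))\bigr].
\end{align*}
Interchanging the infima over $Q$ and $y$ identifies $u(x,\nu)$ with the dual value of the utility maximization under $P^{\nu}$, and strong duality delivers the first claimed equality.

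Second, I would solve the $P^{\nu}$-utility maximization in closed form. Writing $E_{P^{\nu}}[U(V_{T})]=E_{P}[\xi(Y)U(V_{T})]$ and imposing the budget constraint $E_{P}[V_{T}]=x$ through a Lagrange multiplier $\Lambda$, the pointwise first-order condition $\xi(Y)U'(V_{T}^{\star})=\Lambda$ gives $V_{T}^{\star}=I(\Lambda/\xi(Y))$. The multiplier is pinned down by
\begin{align*}
x=E_{P}\bigl[I(\Lambda(x)/\xi(Y))\bigr]=\int_{\mathcal{P}}I\bigl(\Lambda(x)/\xi(y)\bigr)P_{Y}(dy),
\end{align*}
and substituting back yields $u(x,\nu)=\int_{\mathcal{P}}(U\circ I)\bigl(\Lambda(x)/\xi(y)\bigr)\nu(dy)$, the second claimed equality.

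Third, for the wealth and strategy processes: optimality together with PRP make $V^{\star}$ a $P$-martingale, so $V_{t}=E_{P}[V_{T}^{\star}\mid\mathcal{F}_{t}]$, and disintegrating via $P[Y\in dy\mid\mathcal{F}_{t}]=\eta_{t}^{y}P_{Y}(dy)$ produces the stated formula for $V_{t}$. For $\Theta_{t}$, I would differentiate the stochastic-exponential representation of $\eta^{y}$ to obtain $d\eta_{t}^{y}=\eta_{t}^{y}\alpha_{t}^{y}\cdot dS_{t}$ and then differentiate $V_{t}=\int_{\mathcal{P}}I(\Lambda(x)/\xi(y))\eta_{t}^{y}P_{Y}(dy)$ under the integral sign, reading off $\Theta_{t}$ from the resulting $dS_{t}$-coefficient. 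The main technical obstacle is this last exchange of $P_{Y}(dy)$-integration with the stochastic differential, which requires a stochastic Fubini argument justified by the integrability hypothesis in the statement; the other delicate point is verifying the absence of a duality gap in the first step.
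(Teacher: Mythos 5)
The paper does not prove this theorem: it is imported verbatim as Theorem 19 of Baudoin's work \cite{MR2021790}, so there is no in-paper proof to compare against. Your reconstruction follows essentially the same route as the cited source: the complete-market duality formula $\sup_{\Theta}E_{Q}[U(V_{T})]=\inf_{y>0}\bigl(yx+E_{P}[Z\,\tilde U(y/Z)]\bigr)$, the observation that $z\mapsto z\,\tilde U(y/z)$ is the (convex) perspective of $\tilde U$ so that the minimality theorem for $P^{\nu}$ collapses the infimum over $\mathcal{E}^{\nu}$, the interchange of the two infima, the Lagrangian/first-order solution $V_{T}^{\star}=I(\Lambda(x)/\xi(Y))$, and the disintegration $P[Y\in dy\mid\mathcal{F}_{t}]=\eta_{t}^{y}P_{Y}(dy)$ plus stochastic Fubini for $V_{t}$ and $\Theta_{t}$. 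The argument is sound modulo the two gaps you yourself flag (no duality gap, and justifying the exchange of $P_{Y}(dy)$-integration with the stochastic integral), both of which are exactly the points the convergence hypotheses in the statement are there to cover; one small addition worth making explicit is that in this setup $S$ is already a $P$-local martingale, so the unique martingale measure $\tilde P$ coincides with $P$, which is what licenses writing the budget constraint as $E_{P}[V_{T}]=x$.
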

\end{subsection}
\begin{subsection}{Donsker's Invariance Principle for Asymmetric Random Walk}
\begin{theorem}
Let $(\Omega, \mathcal{F}, P)$ be a probability space on which a sequence $\{\xi_{j}^{1}\}_{j=1}^{\infty}$ of independent, identically distributed random variables with variance $\sigma^{2}$ and

\[\xi_{j}^{1} = \begin{cases}
-1, & \text{with proability } 1-p \\
1, & \text{with probability } p > 0
\end{cases}\]

Define $Z^{(n)} = \{Z_{t}^{(n)}, t \geq 0\}$ where $Z_{t}^{(n)} = \frac{Y_{nt}^{1}}{\sigma \sqrt{n}}$ and $Y_{nt}^{1} = S_{\lfloor t \rfloor} + (t - \lfloor t \rfloor)\xi_{\lfloor t \rfloor + 1}^{1}, t \geq 0$.  Let $P_{n}$ be the measure induced by $Z^{(n)}$ on $(C[0,\infty), \mathcal{B}(C[0,\infty)))$. Then $\{P_{n}\}_{n=1}^{\infty}$ converges weakly to a measure $P_{\star}$ under which the coordinate mapping process $W_{t}(\omega) = w(t)$ on $C[0,\infty)$ is a standard, one-dimensional Brownian motion with drift $(2p-1)t$.

\end{theorem}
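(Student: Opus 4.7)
The plan is to reduce to the classical (symmetric) Donsker invariance principle by decomposing each increment into its mean and a mean-zero fluctuation, and then recombining the pieces via the continuous mapping theorem. Write $\xi_{j}^{1} = (2p-1) + \eta_{j}$, where the $\eta_{j}$ are i.i.d.\ with $E[\eta_{j}] = 0$ and $\mathrm{Var}(\eta_{j}) = \sigma^{2}$. This yields a decomposition
\begin{align*}
Z_{t}^{(n)} = D_{t}^{(n)} + \tilde{Z}_{t}^{(n)},
\end{align*}
where $D_{t}^{(n)}$ is a deterministic polygonal path capturing the drift contribution from the common mean $(2p-1)$, and $\tilde{Z}_{t}^{(n)}$ is the piecewise-linearly interpolated, rescaled random walk built from the centered increments $\eta_{j}$.

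For the centered piece I would apply Donsker's classical invariance principle by verifying the two standard ingredients for weak convergence on $C[0,\infty)$ endowed with the topology of uniform convergence on compact sets. First, finite-dimensional convergence: for any $0 \leq t_{1} < \cdots < t_{k}$, the vector $(\tilde{Z}_{t_{1}}^{(n)}, \ldots, \tilde{Z}_{t_{k}}^{(n)})$ converges in distribution to $(B_{t_{1}}, \ldots, B_{t_{k}})$ by the multivariate central limit theorem applied to the independent increments $\tilde{S}_{\lfloor n t_{i+1} \rfloor} - \tilde{S}_{\lfloor n t_{i} \rfloor}$ together with the Cram\'er--Wold device; the linear interpolation contributes only a vanishing $O(1/\sqrt{n})$ correction. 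Second, tightness of $\{\tilde{Z}^{(n)}\}$ on every $C[0,T]$: by Billingsley's criterion it suffices to show
\begin{align*}
\lim_{\delta \to 0} \limsup_{n \to \infty} P \left( \sup_{|s-t|<\delta,\; s,t \in [0,T]} |\tilde{Z}_{t}^{(n)} - \tilde{Z}_{s}^{(n)}| > \epsilon \right) = 0,
\end{align*}
which follows from fourth-moment control of the partial sums (available since the $\eta_{j}$ are bounded) combined with a Kolmogorov-type maximal inequality.

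Finally, since the deterministic path $D_{t}^{(n)}$ tends, under the stated scaling, to the linear drift $(2p-1)t$ uniformly on every compact interval, the continuous mapping theorem applied to the addition map $(f,g) \mapsto f+g$ on $C[0,\infty)$ yields $Z^{(n)} \Rightarrow B_{t} + (2p-1)t$, i.e.\ Brownian motion with drift $(2p-1)t$, as required. The principal obstacle is the tightness estimate for $\tilde{Z}^{(n)}$: the finite-dimensional convergence reduces to a direct CLT application and the reassembly is a routine continuous-functional argument, whereas uniform control of the modulus of continuity demands the careful moment bounds and is the most technical step.
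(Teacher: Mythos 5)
Your proof follows essentially the same route as the paper's: center the increments as $\xi_{j}^{1} = (2p-1) + \eta_{j}$, apply the classical Donsker invariance principle to the interpolated walk built from the centered increments (finite-dimensional convergence plus tightness), and then recombine with the deterministic drift path. The one point worth flagging is that both you and the paper treat the drift contribution $D_{t}^{(n)} = (2p-1)nt/(\sigma\sqrt{n}) = (2p-1)\sqrt{n}\,t/\sigma$ as converging to $(2p-1)t$ uniformly on compacts, which under the literal scaling in the statement only holds if $p = p_{n} = \tfrac{1}{2} + O(1/\sqrt{n})$; for fixed $p \neq \tfrac{1}{2}$ this term diverges, so that step is the unjustified one in both arguments rather than the tightness estimate you single out.
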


\begin{proof} Let $\xi_{j} = \xi_{j}^{1} - (2p - 1)$ for each $j \geq 1$.  Then $E[\xi_{j}] = 0$ and $Var[\xi_{j}] = pq = \sigma^{2} \geq 0$.  Set $X_{t}^{(n)} = \frac{Y_{nt}}{\sigma \sqrt{n}}$ where $Y_{t} = S_{\lfloor t \rfloor} + (t - \lfloor t \rfloor)\xi_{\lfloor t \rfloor + 1}$ as in (4.10) of \cite{MR1121940}.  By Theorem 2.4.17, p. 67 of \cite{MR1121940} $(X_{t_{1}}^{(n)}, ... , X_{t_{d}}^{(n)}) \overset{d}{\rightarrow} (B_{t_{1}}, ... , B_{t_{d}})$ as $n \rightarrow \infty$, where $\{B_{t}\}_{t \geq 0}$ denotes a standard one-dimensional Brownian motion.  By a corollary of \cite{MR1796326} on p. 109, we see convergence in distribution is preserved by addition or multiplication by a convergent sequence of constants, i.e. $(Z_{t_{1}}^{(n)}, ... , Z_{t_{d}}^{(n)}) \overset{d}{\rightarrow} (B_{t_{1}} + (2p-1)t_{1}, ... , B_{t_{d}} + (2p - 1)t_{d})$ as $n \rightarrow \infty$.  That $Z^{(n)}$ is tight follows exactly as in the proof of Donsker's Theorem 2.4.20, p. 71 of \cite{MR1121940} as 
\begin{align*}
\lim_{\delta \downarrow 0} \sup_{n \geq 1}P\left[\max_{|s - t| \leq \delta, 0 \leq s,t \leq T} |Z_{t}^{(n)} - Z_{s}^{(n)}| > \epsilon \right] &\leq \lim_{\delta \downarrow 0} \sup_{n \geq 1} P\left[\max_{|s - t| \leq \delta, 0 \leq s,t \leq T} |X_{t}^{(n)} - X_{s}^{(n)}| > \frac{\epsilon}{2} \right] \\ &+ \lim_{\delta \downarrow 0} \sup_{n \geq 1} P\left[\max_{|s - t| \leq \delta, 0 \leq s,t \leq T} (2p - 1)|t - s| > \frac{\epsilon}{2} \right] \\ &= 0
\end{align*}

By Theorems 2.4.15, 2.4.17 of \cite{MR1121940}, $Z_{t}^{(n)} \overset{d}{\rightarrow} B_{t} + (2p - 1)t$ as $n \rightarrow \infty$.
\end{proof}

\textbf{Note:}  If we instead let \[\xi_{j}^{1} = \begin{cases}
0, & \text{with probability } 1-p \\
1, & \text{with probability } p > 0
\end{cases}\]
in the previous theorem, then by repeating the proof we will obtain $Z_{t}^{(n)} \overset{d}{\rightarrow} B_{t} + pt$ as $n \rightarrow \infty$.
\end{subsection}
\begin{subsection}{Weak convergence of multinomial asymmetric random walks to Brown motions with drift}
For the main proposition of this subsection, we will need two famous results which can be found in \cite{MR2512800}, for instance.
\begin{theorem}\textbf{(Glivenko)}
If $\phi_{n}$ and $\phi$ are the characteristic functions of probability distributions $P_{n}$ and $P$ (respectively), for each $n \in \mathbb{N}$, then $\phi_{n}(u) \rightarrow \phi(u)$ for all $u \in \mathbb{R}^{d}$ implies $P_{n} \overset{d}{\rightarrow} P$ as $n \rightarrow \infty$.
\end{theorem}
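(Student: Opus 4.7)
The plan is to prove this classical continuity theorem by the standard two-step route: first establish tightness of $\{P_{n}\}$, and then identify the weak limit of any convergent subsequence using uniqueness of characteristic functions.

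First I would establish tightness. The key ingredient is the elementary truncation estimate, valid for every $\delta > 0$ in $\mathbb{R}^{d}$,
\[
P_{n}\left\{x : \N{x}_{\infty} > 2/\delta\right\} \leq C_{d}\,\delta^{-d} \int_{[-\delta,\delta]^{d}} \left(1 - \mathrm{Re}\,\phi_{n}(u)\right)\,du,
\]
which follows by Fubini from the one-dimensional identity $\frac{1}{2\delta}\int_{-\delta}^{\delta}(1-e^{iux})\,du = 1 - \frac{\sin(\delta x)}{\delta x}$ together with the pointwise bound $1 - \sin(t)/t \geq \tfrac{1}{2}$ on $\{|t| > 2\}$, iterated coordinatewise. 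Since $\phi$ is the characteristic function of the probability measure $P$, it satisfies $\phi(0) = 1$ and is continuous at $0$. Given $\epsilon > 0$, I would pick $\delta$ small enough that the right-hand side of the above inequality, with $\phi$ in place of $\phi_{n}$, is at most $\epsilon/2$. Because $|1-\mathrm{Re}\,\phi_{n}|\leq 2$ and $\phi_{n}\to\phi$ pointwise, the Dominated Convergence Theorem yields the convergence of the integrals $\int_{[-\delta,\delta]^{d}}(1-\mathrm{Re}\,\phi_{n})\,du \to \int_{[-\delta,\delta]^{d}}(1-\mathrm{Re}\,\phi)\,du$, so $P_{n}\{\N{x}_{\infty} > 2/\delta\} < \epsilon$ for all sufficiently large $n$; enlarging the compact set to absorb the finitely many exceptional $n$ gives tightness.

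Second, by Prokhorov's theorem every subsequence of $\{P_{n}\}$ admits a further subsequence $\{P_{n_{k}}\}$ converging weakly to some probability measure $Q$ on $\mathbb{R}^{d}$. Because $x \mapsto e^{iu\cdot x}$ is bounded and continuous, weak convergence transfers to pointwise convergence of characteristic functions, so the characteristic function of $Q$ equals $\lim_{k}\phi_{n_{k}}(u) = \phi(u)$ for every $u \in \mathbb{R}^{d}$. The uniqueness theorem for characteristic functions then forces $Q = P$.

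Since every subsequence of $\{P_{n}\}$ admits a further subsequence converging weakly to the same limit $P$, a standard subsequence-of-subsequence argument implies that the whole sequence converges weakly to $P$. The main obstacle is the tightness step, and specifically the $d$-dimensional truncation inequality; once that is in hand, the rest of the proof is a clean Prokhorov-plus-uniqueness argument. A minor subtlety worth noting is that uniform integrability of the approximating characteristic functions is automatic from the trivial bound $|\phi_{n}| \leq 1$, which is what makes the dominated convergence step go through without any further assumption on the rate of convergence.
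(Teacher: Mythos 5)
The paper does not prove this statement at all: it is quoted as one of ``two famous results which can be found in \cite{MR2512800}'' and used as a black box, so there is no in-paper argument to compare against. Your proof is the standard and correct one for Glivenko's form of the continuity theorem (where the limit $\phi$ is assumed a priori to be the characteristic function of a probability measure $P$, so continuity of $\phi$ at $0$ comes for free): the truncation inequality
$P_{n}\{\N{x}_{\infty} > 2/\delta\} \leq C_{d}\,\delta^{-d}\int_{[-\delta,\delta]^{d}}(1-\mathrm{Re}\,\phi_{n}(u))\,du$
is valid (using $\bigl|\prod_{j}\sin(\delta x_{j})/(\delta x_{j})\bigr| \leq 1/2$ whenever some $|\delta x_{j}| \geq 2$), dominated convergence with the bound $|1-\mathrm{Re}\,\phi_{n}| \leq 2$ on the bounded cube legitimately transfers the smallness from $\phi$ to $\phi_{n}$ for large $n$, and the Prokhorov-plus-uniqueness subsequence argument closes the proof. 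One cosmetic remark: your closing sentence about ``uniform integrability'' of the $\phi_{n}$ is a slightly misleading label for what is simply domination by a constant on a set of finite measure, but it does not affect correctness.
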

\begin{theorem}\textbf{(Levy's Continuity Theorem)}
If $\{\phi_{n}\}_{n \in \mathbb{N}}$ is a sequence of characteristic functions and there exists a function $\psi : \mathbb{R}^{d} \rightarrow \mathbb{C}$ such that, for all $u \in \mathbb{R}^{d}$, $\phi_{n}(u) \rightarrow \psi(u)$ as $n \rightarrow \infty$ as $n \rightarrow \infty$ and $\psi$ is continuous at 0, then $\psi$ is the characteristic function of a probability distribution.
\end{theorem}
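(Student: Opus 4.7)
The plan is to prove Levy's Continuity Theorem via the standard two-step strategy: first establish tightness of $\{P_n\}$ using the hypothesized continuity of $\psi$ at $0$, then identify any subsequential weak limit by its characteristic function.

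First I would establish the key tail inequality relating a probability measure on $\mathbb{R}^d$ to its characteristic function. For $d = 1$ and any probability measure $\mu$ with characteristic function $\phi$, Fubini gives
\begin{align*}
\frac{1}{2u}\int_{-u}^{u}\bigl(1 - \phi(t)\bigr)\,dt = \int_{\mathbb{R}} \left(1 - \frac{\sin(ux)}{ux}\right)\mu(dx) \geq \frac{1}{2}\,\mu\bigl(\{x : |x| \geq 2/u\}\bigr),
\end{align*}
using $1 - \sin(y)/y \geq 1/2$ for $|y| \geq 2$. The multidimensional version is obtained coordinatewise. This reduces the tightness question to the behaviour of $\phi_n$ near the origin.

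Next I would use this inequality to prove tightness. Since $\phi_n(0) = 1$ for all $n$ and $\phi_n \to \psi$ pointwise with $\psi$ continuous at $0$, dominated convergence (with bound $|1 - \phi_n| \leq 2$) implies
\begin{align*}
\lim_{n \to \infty} \frac{1}{2u}\int_{-u}^{u}\bigl(1 - \phi_n(t)\bigr)\,dt = \frac{1}{2u}\int_{-u}^{u}\bigl(1 - \psi(t)\bigr)\,dt,
\end{align*}
and by continuity of $\psi$ at $0$ the right side can be made arbitrarily small by choosing $u$ small. Combining with the tail inequality, for every $\varepsilon > 0$ there exist $u > 0$ and $N$ with $P_n(\{|x| \geq 2/u\}) < \varepsilon$ for $n \geq N$; adjusting for finitely many initial terms (which are individually tight) yields a compact set $K_\varepsilon$ with $\sup_n P_n(K_\varepsilon^c) < \varepsilon$, i.e.\ uniform tightness.

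By Prokhorov's theorem, every subsequence of $\{P_n\}$ admits a further weakly convergent subsequence $P_{n_k} \Rightarrow Q$ for some probability measure $Q$. Since $e^{i\langle u,\cdot\rangle}$ is bounded and continuous, the characteristic function of $Q$ equals $\lim_k \phi_{n_k}(u) = \psi(u)$. In particular $\psi$ is a characteristic function (proving the existence statement). Because characteristic functions uniquely determine probability measures, every subsequential limit coincides with $Q$; hence the whole sequence $P_n \Rightarrow Q$, and Glivenko's theorem (cited above) already supplies the easy direction from characteristic convergence to weak convergence once existence is established.

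The main obstacle is the tightness step: one must combine the analytic tail bound with the qualitative hypothesis that $\psi$ is merely continuous at $0$ (not known a priori to be a characteristic function) to extract a quantitative uniform control of tails. Everything else is an application of Prokhorov's theorem and the uniqueness of characteristic functions, both of which are standard.
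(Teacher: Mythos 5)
The paper does not prove this statement at all: it is quoted as a classical result and attributed to the reference \cite{MR2512800}, so there is no in-paper argument to compare against. Your proposal is the standard textbook proof and it is correct. The tail inequality $\frac{1}{2u}\int_{-u}^{u}(1-\phi(t))\,dt \geq \frac{1}{2}\,\mu(\{|x|\geq 2/u\})$ is derived correctly via Fubini and the bound $1-\sin(y)/y\geq 1/2$ for $|y|\geq 2$ (together with nonnegativity of the integrand elsewhere), and the passage from continuity of $\psi$ at $0$ (with $\psi(0)=\lim_n\phi_n(0)=1$) to uniform tightness via dominated convergence is the right mechanism; the coordinatewise reduction handles $d>1$. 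The subsequence argument through Prokhorov's theorem, identification of the limit's characteristic function as $\psi$, and uniqueness of characteristic functions then yield both that $\psi$ is a characteristic function and that the full sequence converges weakly, which is slightly more than the bare statement requires but is the natural package. Two cosmetic points: you should note that $\int_{-u}^{u}(1-\phi_n(t))\,dt$ is real (since $\phi_n(-t)=\overline{\phi_n(t)}$) or work with its real part before comparing it to the nonnegative tail mass, and the final remark about Glivenko's theorem is redundant since your subsequence argument already delivers the weak convergence.
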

\begin{theorem}\textbf{(Kac's Theorem)}
The random variables $X_{1},...,X_{n}$ are independent if and only if
\begin{align*}
E\left[ \exp \left(i \sum_{j=1}^{n}(u_{j},X_{j}) \right) \right] = \phi_{X_{1}}(u_{1}) \cdots \phi_{X_{n}}(u_{n})
\end{align*}
for all $u_{1},...,u_{n} \in \mathbb{R}^{d}$.
\end{theorem}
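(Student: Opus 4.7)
The plan is to prove both implications, using Glivenko's theorem stated just above as the main tool for the nontrivial direction.

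For the forward direction, suppose $X_1, \ldots, X_n$ are independent. Then for any fixed $u_1, \ldots, u_n \in \mathbb{R}^d$, the random variables $\exp(i(u_1, X_1)), \ldots, \exp(i(u_n, X_n))$ are bounded and independent (as Borel functions of independent random variables), so the expectation of their product factors:
\begin{align*}
E\left[\exp\left(i \sum_{j=1}^{n}(u_j, X_j)\right)\right] = \prod_{j=1}^{n} E[\exp(i(u_j, X_j))] = \prod_{j=1}^{n} \phi_{X_j}(u_j).
\end{align*}
This step is routine and follows directly from the multiplicative property of expectation for independent random variables.

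For the converse, the strategy is to exhibit two probability measures on $(\mathbb{R}^d)^n$ that have the same characteristic function, then invoke uniqueness. Let $\mu$ denote the joint law of $(X_1, \ldots, X_n)$ on $(\mathbb{R}^d)^n$ and let $\nu := \mu_{X_1} \otimes \cdots \otimes \mu_{X_n}$ denote the product of the marginal laws. The characteristic function of $\mu$ evaluated at $(u_1, \ldots, u_n)$ is, by definition, the left-hand side of the displayed identity, while the characteristic function of $\nu$ is computed by Fubini as $\prod_{j=1}^{n} \phi_{X_j}(u_j)$. The hypothesis says these two characteristic functions coincide on all of $(\mathbb{R}^d)^n$.

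I would then invoke Glivenko's theorem (as stated above) to conclude $\mu = \nu$, i.e. the joint law equals the product of marginals, which is precisely the definition of independence of $X_1, \ldots, X_n$. The main obstacle is really just recognising that the claim reduces to the injectivity of the characteristic-function map on probability measures on $(\mathbb{R}^d)^n$; once that reduction is made, the argument is short. One small technical point to be careful with is that Glivenko's theorem is phrased as ``convergence of characteristic functions implies weak convergence,'' so I would apply it with a constant sequence $P_n \equiv \mu$, $P = \nu$ (or invoke the standard uniqueness corollary that $\phi_\mu = \phi_\nu$ implies $\mu = \nu$), to obtain $\mu = \nu$ as measures.
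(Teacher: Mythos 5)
Your proof is correct. The paper itself gives no argument for this statement --- Kac's Theorem is quoted as a known result from the cited reference alongside Glivenko's and L\'evy's theorems --- so there is no in-paper proof to compare against. Your two directions (factorization of expectations for the easy implication; identifying the joint law and the product of marginals via uniqueness of characteristic functions on $(\mathbb{R}^d)^n$ for the converse) are the standard argument, and your remark about deducing the uniqueness statement from Glivenko's theorem via a constant sequence is a legitimate way to close that small gap.
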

\begin{proposition}
The Trinomial Asymetric Random Walk converges in distribution to the sum of two independent Brownian motions with drift.
\end{proposition}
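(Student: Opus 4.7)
The plan is to write each trinomial step as a sum of two independent Bernoulli-type increments, apply the asymmetric Donsker invariance principle of the previous subsection coordinate by coordinate, and then assemble the joint weak limit using Kac's Theorem together with Glivenko's and Lévy's Continuity Theorems. Concretely, a trinomial step $\xi_{j}$ taking values $-1, 0, 1$ with probabilities $q, r, p$ can be written as $\xi_{j} = \xi_{j}^{(1)} + \xi_{j}^{(2)}$, where $\xi_{j}^{(1)} \in \{0, 1\}$ is Bernoulli with parameter $p_{1}$ and $\xi_{j}^{(2)} \in \{-1, 0\}$ takes the value $-1$ with probability $p_{2}$, independent of $\xi_{j}^{(1)}$; the parameters $p_{1}, p_{2}$ are obtained from $p, q, r$ by a short algebraic computation. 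Taking $\{\xi_{j}^{(1)}\}_{j \geq 1}$ and $\{\xi_{j}^{(2)}\}_{j \geq 1}$ to be two mutually independent i.i.d.\ sequences realizes the trinomial walk as the sum of two independent asymmetric binomial walks of the type treated in the previous subsection.

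By the Note following the asymmetric Donsker theorem (applied directly for the $\{0,1\}$ coordinate and after a sign flip for the $\{-1,0\}$ coordinate), each rescaled walk $Z_{t}^{(n), i} := Y_{nt}^{i}/(\sigma_{i} \sqrt{n})$ converges in distribution to $B^{i} + p_{i} t$ in $C[0, \infty)$, where $B^{1}, B^{2}$ are standard one-dimensional Brownian motions. For the joint limit, the independence of $\xi_{j}^{(1)}$ and $\xi_{j}^{(2)}$ implies via Kac's Theorem that the finite-dimensional joint characteristic functions of $(Z^{(n), 1}, Z^{(n), 2})$ factor as products of the two individual characteristic functions. Each factor converges pointwise by Glivenko's Theorem, the product is continuous at $0$, and Lévy's Continuity Theorem identifies the limit as the joint characteristic function of two independent Brownian motions with drift. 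Combined with joint tightness in $C([0, \infty), \mathbb{R}^{2})$ inherited componentwise from the tightness step in the previous proof, this yields joint weak convergence.

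The continuous mapping theorem applied to addition then gives $Z^{(n), 1} + Z^{(n), 2} \overset{d}{\rightarrow} (B^{1} + p_{1} t) + (B^{2} + p_{2} t)$, i.e., the trinomial walk converges to the sum of two independent Brownian motions with drift. The principal obstacle I expect lies in the decomposition step: identifying the precise parametric constraints on $(p, q, r)$ under which solutions $p_{1}, p_{2} \in [0, 1]$ exist reduces to a discriminant/positivity condition on the associated quadratic, and this must either be imposed as a hypothesis or absorbed into the definition of ``trinomial asymmetric random walk'' used here. Once the decomposition is in hand, the remaining steps amount to a clean assembly of the asymmetric Donsker result with the three characteristic-function tools stated just before the proposition.
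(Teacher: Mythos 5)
Your proposal is essentially the paper's own argument: the paper decomposes each step as $\xi_{n}=\xi_{n}^{1}+\xi_{n}^{2}-1$ with two independent $\{0,1\}$-valued Bernoulli variables of parameters $p$ and $q$ (your $\{-1,0\}$-valued second component is just the paper's second component shifted by $-1$), applies the Note following the asymmetric Donsker theorem to each rescaled summand, and assembles the joint limit with Kac, Glivenko, and L\'evy exactly as you describe. The parametric caveat you flag is legitimate, but the paper sidesteps it by defining the trinomial walk directly through the product-form probabilities $pq$, $p(1-q)+(1-p)q$, $(1-p)(1-q)$ rather than starting from an arbitrary triple $(p,r,q)$.
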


\begin{proof}
Write the trinomial random walk as $\xi_{n} = \xi_{n}^{1} + \xi_{n}^{2} - 1$ where 
\[\xi_{j}^{1} = \begin{cases}
0, & \text{with probability } 1-p \\
1, & \text{with probability } p > 0
\end{cases}\]
and
\[\xi_{j}^{2} = \begin{cases}
0, & \text{with probability } 1-q \\
1, & \text{with probability } q > 0
\end{cases}\]
and $\xi_{m}^{i} \perp \xi_{n}^{j}$ for all $i, j, m, n$.  Then
\[\xi_{n} = \begin{cases}
1, & \text{with probability } pq \\
0, & \text{with probability } p(1-q) + (1-p)q \\
-1, & \text{with probability } (1-p)(1-q)
\end{cases}\]
Write $Z_{t}^{(n)} = \frac{Y_{nt}^{1}}{\sigma_{1} \sqrt{n}} + \frac{Y_{nt}^{2}}{\sigma_{2} \sqrt{n}}$, where 
\begin{align}
Y_{nt}^{1} = S_{\lfloor t \rfloor}^{1} + (t - \lfloor t \rfloor)\left(\xi_{\lfloor t \rfloor + 1}^{1} - \frac{1}{2}\right) \\
Y_{nt}^{2} = S_{\lfloor t \rfloor}^{2} + (t - \lfloor t \rfloor)\left(\xi_{\lfloor t \rfloor + 1}^{2} - \frac{1}{2}\right)
\end{align}
and 
\begin{align}
S_{\lfloor t \rfloor}^{1} = \sum\limits_{j = 0}^{\lfloor t \rfloor} \left(\xi_{j}^{1} - \frac{1}{2}\right) \\
S_{\lfloor t \rfloor}^{2} = \sum\limits_{j = 0}^{\lfloor t \rfloor} \left(\xi_{j}^{2} - \frac{1}{2}\right)
\end{align}
Note $Y_{nt}^{1} \perp Y_{nt}^{2}$.  So, imitating the Donsker principle for an asymmetric random walk, the summands satisfy
\[\xi_{j}^{1} - \frac{1}{2} \overset{d}{=} \begin{cases}
\frac{-1}{2}, &\text{with probability } 1-p \\
\frac{1}{2}, &\text{with probability } p
\end{cases}\]
and
\[\xi_{j}^{2} - \frac{1}{2} \overset{d}{=} \begin{cases}
\frac{-1}{2}, &\text{with probability } 1-q \\
\frac{1}{2}, &\text{with probability } q
\end{cases}\]
We see 
\begin{align}
\frac{Y_{nt}^{1}}{\sigma_{1}\sqrt{n}} \overset{d}{\rightarrow} B_{t}^{1} + \left(p - \frac{1}{2}\right)t \\
\frac{Y_{nt}^{2}}{\sigma_{2}\sqrt{n}} \overset{d}{\rightarrow} B_{t}^{2} + \left(q - \frac{1}{2}\right)t
\end{align}
Using $Y_{nt}^{1} \perp Y_{nt}^{2}$, we see
\begin{align*}
\exp(iuZ_{t}^{(n)}) &= \exp\left(iu\left(\frac{Y_{nt}^{1}}{\sigma_{1}\sqrt{n}} + \frac{Y_{nt}^{2}}{\sigma_{2}\sqrt{n}}\right)\right) \\
&= \exp\left(iu\frac{Y_{nt}^{1}}{\sigma_{1}\sqrt{n}}\right)\cdot \exp\left(iu\frac{Y_{nt}^{2}}{\sigma_{2}\sqrt{n}}\right) \text{ by Kac } \\
& \rightarrow \exp\left(iu\left(B_{t}^{1} + \left(p - \frac{1}{2}\right)t\right)\right) \cdot \exp\left(iu\left(B_{t}^{2} + \left(q - \frac{1}{2}\right)t\right)\right) \text{ by Glivenko }\\
&= \exp\left(iu\left(p + q - 1\right)t -\frac{1}{2}t^{2}u^{2}\right)
\end{align*}
Since the last expression is the characteristic function of the sum of two independent Brownian motions with drift, as well as a function continuous 0, we may apply Levy's continuity theorem to obtain $Z_{t}^{(n)} \overset{d}{\rightarrow} B_{t}^{1} + B_{t}^{2} + (p + q - 1)t$ where $B_{t}^{1} \perp B_{t}^{2}$.  By multiplying the summands by constants or adding constants we achieve the same result, with only the drift term affected by the corollary of \cite{MR1796326} on p. 109.
\end{proof}

\begin{proposition}
The n-multinomial asymmetric random walk converges in distribution to the sum of $2^{(n-2)}$ independent standard Brownian motions with drift.
\end{proposition}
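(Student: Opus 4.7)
The plan is to mimic the trinomial proposition above almost verbatim, with the decomposition of the step variable as the only new ingredient. Concretely, I would first represent the $n$-multinomial increment $\xi_j$ as an affine function of $2^{n-2}$ independent $\{0,1\}$-valued Bernoullis,
\begin{align*}
\xi_j \overset{d}{=} \sum_{i=1}^{2^{n-2}}\xi_j^{(i)} - \kappa_n,
\end{align*}
for some centering constant $\kappa_n$. The case $n=3$ is exactly the decomposition used in the previous proposition; for $n>3$ I would proceed by induction, writing the $n$-multinomial as an independent mixture of two $(n-1)$-multinomials through one extra Bernoulli flip, and applying the induction hypothesis to each $(n-1)$-multinomial. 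This yields $2\cdot 2^{n-3} = 2^{n-2}$ independent Bernoulli summands in total.

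Second, for each $i$ I would form the polygonal interpolation
\begin{align*}
Y_{nt}^{(i)} = S_{\lfloor t\rfloor}^{(i)} + (t-\lfloor t\rfloor)\bigl(\xi_{\lfloor t\rfloor+1}^{(i)} - \tfrac12\bigr),
\end{align*}
with $S_{\lfloor t\rfloor}^{(i)} = \sum_{j=0}^{\lfloor t\rfloor}(\xi_j^{(i)}-\tfrac12)$, and invoke the Note at the end of subsection 3.2 to conclude $Y_{nt}^{(i)}/(\sigma_i\sqrt{n}) \overset{d}{\rightarrow} B_t^{(i)} + (p_i-\tfrac12)t$ for each component, where $p_i$ is the success probability of $\xi^{(i)}$. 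Independence of the columns across $i$ then lets me factor the characteristic function of $Z_t^{(n)} := \sum_i Y_{nt}^{(i)}/(\sigma_i\sqrt{n})$ as a product via Kac's theorem, apply Glivenko's theorem factor-by-factor to pass to the limit, and finally invoke L\'evy's continuity theorem to obtain $Z_t^{(n)} \overset{d}{\rightarrow} \sum_i B_t^{(i)} + \bigl(\sum_i p_i - 2^{n-3}\bigr)t$. Tightness in $C[0,\infty)$ follows as in the trinomial argument, by dominating the modulus of continuity of $Z^{(n)}$ by a finite sum of the moduli of the $2^{n-2}$ tight components.

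The main obstacle is the first step. Not every $n$-multinomial law arises as an affine combination of $2^{n-2}$ independent Bernoullis, since the parameters $p_i$ must satisfy algebraic constraints dictated by the prescribed probabilities -- as is already visible in the trinomial case, where the middle probability is $p(1-q)+(1-p)q$ rather than a free parameter. The proposition should therefore be read as applying to those $n$-multinomials admitting such a decomposition, and part of the proof is to characterise this class explicitly, for instance by matching moments or by a direct combinatorial argument on the support. Once the decomposition is secured, the remainder of the argument is a direct transcription of the trinomial proof, and essentially no new analytic input is required beyond Kac, Glivenko, and L\'evy.
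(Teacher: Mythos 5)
Your proposal follows the same route as the paper's own proof, which is a two-sentence induction: take the trinomial proposition as the base case and decompose the step of the $n$-multinomial walk into independent lower-order pieces, then run the Donsker/Kac/Glivenko/L\'evy argument componentwise. Where you go beyond the paper is in spelling out the componentwise polygonal interpolation and the tightness estimate, and -- importantly -- in flagging that the decomposition of the step variable is the real content of the claim. That flag is correct, but the obstacle is worse than you state, and neither you nor the paper overcomes it.

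The gap is this. If $\xi_j$ is an affine image of a sum of $k$ independent, nondegenerate $\{0,1\}$-Bernoullis with parameters $p_i\in(0,1)$, its probability generating function is $\prod_{i=1}^{k}(1-p_i+p_i z)$, so $\xi_j$ has \emph{full} support on an arithmetic progression of $k+1$ points (every value has strictly positive probability). Taking $k=2^{n-2}$ gives $2^{n-2}+1$ support points, which equals $n$ only when $n=3$; for $n\ge 4$ the class of $n$-valued step distributions admitting the proposed decomposition is empty, not merely constrained. Even for a step supported on $k+1$ points, existence of the factorization requires the generating polynomial to have all real roots (already in the trinomial case this forces $\pi_0^2\ge 4\pi_{-1}\pi_1$, as you observe), so the statement cannot hold for a general asymmetric multinomial. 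Your inductive step also conflates two incompatible operations: an ``independent mixture of two $(n-1)$-multinomials through one extra Bernoulli flip'' is a convex combination of laws, not a sum of independent random variables, and Kac's theorem does not factor the characteristic function of a mixture. If you replace the mixture by a genuine sum of two independent $(n-1)$-multinomial steps (which is what the paper's proof asserts), the support grows to $2n-3$ points and you are no longer describing an $n$-multinomial. To make the proposition true one must either redefine ``$n$-multinomial'' as precisely those walks whose step is a sum of $2^{n-2}$ independent Bernoullis (in which case the result is immediate from your second paragraph and nothing needs to be characterised), or change the count of Brownian motions to $n-1$ and restrict to steps whose generating polynomial splits over $\mathbb{R}$ with coefficients in $[0,1]$. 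As written, the first step of your argument -- and of the paper's -- fails for $n\ge 4$.
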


\begin{proof}
The previous theorem is the base case for our induction argument.  Suppose this is true for the $(n-1)$-multinomial asymmetric random walk.  Then since the summands of a $n$-multinomial asymmetric random walk can be written as the sum of $(n-1)$-multinomial asymmetric random walks, we see the n-multinomial converges to the sum of $2 \cdot 2^{(n-3)}$ independent standard Brownian motions with drift.
\end{proof}
\end{subsection}
\end{section}

\begin{section}{The Main Result (Complete Case)}
The goal of this section is to prove that under certain conditions, we can approximate the optimal utility by a sequence of discretized versions.  More clearly, we wish to prove $u(x, \nu_{n}) \rightarrow u(x, \nu)$ for each $x > 0$, where we replace $\xi(y) = \frac{d\nu}{dP_{Y}}(y)$ by $\xi_{n}(y) = \frac{d\nu_{n}}{dP_{Y_{n}}}(y)$ in $u(x, \nu_{n})$.  In order to do this, we need some results from previous works, and some assumptions.  We assume that $\mathcal{P}_{n} \subseteq \mathcal{P}$ for each $n \geq 0$ in some sense.  For example, if $\mathcal{P}_{n}$ is the range of a symmetric binomial random walk, then we can embed this into $\mathcal{P} = C_{0}[0,T]$, the space of continuous functions starting at 0, as in \cite{MR3971211}, for example.\\

Next, we need the following lemma which is a generalization of Problem 2.4.12 of \cite{MR1121940}

\begin{lemma}
Let $\{f_{n}\}_{n = 0}^{\infty} \subset C(\mathcal{P})$ and $\{\mu_{n}\}_{n=0}^{\infty}$ be a sequence of probability measures such that
\begin{enumerate}
	\item $f_{n} \rightarrow f_{0}$ uniformly on compact sets
	\item $\underset{n \geq 0}{\sup}||f_{n}||_{\infty} =: B < \infty$
	\item $\mu_{n} \rightarrow \mu_{0}$ vaguely as $n \rightarrow \infty$
\end{enumerate}
Then 
\begin{align*}
\int f_{n}d\mu_{n} \rightarrow \int fd\mu \text{ as } n \rightarrow \infty
\end{align*}
\end{lemma}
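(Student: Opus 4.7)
My plan is to run the standard three-$\epsilon$ argument, the same strategy used for Problem 2.4.12 of \cite{MR1121940}, but allowing the integrand to vary with $n$. First I decompose
\begin{align*}
\left|\int f_n\, d\mu_n - \int f_0\, d\mu_0\right|
\le \left|\int (f_n - f_0)\, d\mu_n\right|
+ \left|\int f_0\, d\mu_n - \int f_0\, d\mu_0\right|
\end{align*}
and handle the two pieces separately.

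For the first piece I fix $\epsilon > 0$ and invoke tightness of $\{\mu_n\}_{n \ge 0}$ to pick a compact $K \subset \mathcal{P}$ with $\sup_{n \ge 0}\mu_n(\mathcal{P}\setminus K) < \epsilon/(4B)$. On $K$, hypothesis (1) gives an $N$ with $\sup_{x \in K}|f_n(x) - f_0(x)| < \epsilon/2$ for all $n \ge N$, contributing at most $\epsilon/2$ to the integral. On $\mathcal{P}\setminus K$, hypothesis (2) yields $|f_n - f_0| \le 2B$ pointwise, contributing at most $2B \cdot \epsilon/(4B) = \epsilon/2$. Hence the first piece is below $\epsilon$ for all $n \ge N$.

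For the second piece I upgrade vague convergence to weak convergence. The standard route is: since $\mu_0$ is a probability measure, for $\delta > 0$ pick a compact $K_0$ with $\mu_0(K_0^c) < \delta$ and a test function $\varphi \in C_c(\mathcal{P})$ with $0 \le \varphi \le 1$ and $\varphi \equiv 1$ on $K_0$. Vague convergence gives $\int \varphi\, d\mu_n \to \int \varphi\, d\mu_0 > 1 - \delta$, so $\mu_n(K_0^c) < 2\delta$ for all large $n$, and the finitely many stragglers are absorbed by enlarging $K_0$ further. This both supplies the tightness used above and, via the portmanteau theorem applied to the bounded continuous function $f_0$, yields $\int f_0\, d\mu_n \to \int f_0\, d\mu_0$.

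The main obstacle I expect is precisely this vague-to-weak upgrade: the hypotheses do not spell out topological assumptions on $\mathcal{P}$, and local compactness (which makes cutoffs $\varphi \in C_c(\mathcal{P})$ available) is what the argument above really uses. This is fine for the discrete $\mathcal{P}_n$ appearing in Section~2 and for the $C_0[0,T]$ embedding suggested in the author's remark preceding the lemma, but a general Polish $\mathcal{P}$ without local compactness would force me to fall back on verifying tightness of $\{\mu_n\}$ directly from the structure of the particular measures appearing in the main theorem before invoking Prokhorov's theorem.
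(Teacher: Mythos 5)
Your proof follows essentially the same route as the paper's: the same splitting into $\left|\int(f_n-f_0)\,d\mu_n\right|$ plus $\left|\int f_0\,d\mu_n-\int f_0\,d\mu_0\right|$, with tightness plus uniform convergence on a large compact set controlling the first term and convergence of the measures controlling the second. The one substantive difference is in your favor: the paper simply asserts the tightness of $\{\mu_n\}$ via ``Prohorov's Theorem'' and then invokes vague convergence to conclude $\int f_0\,d\mu_n\to\int f_0\,d\mu_0$ even though $f_0$ is only bounded continuous rather than compactly supported, whereas your cutoff argument upgrading vague to weak convergence (and your remark about where local compactness of $\mathcal{P}$ is actually used) supplies exactly the step the paper's proof leaves implicit.
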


\begin{proof}
Let $\epsilon > 0$, and choose a compact set $K \subset \mathcal{P}$ such that $\underset{n \geq 0}{\sup} \mu_{n} (K^{c}) < \epsilon /6B$ by Prohorov's Theorem and the tightness of $\{\mu_{n}\}_{n = 0}^{\infty}$.  Then, since $f_{n} \rightarrow f$ uniformly on compact sets we may choose $N \in \mathbb{N}$ such that $n \geq N$ implies $\underset{y \in K}{\sup} |f_{n}(y) - f(y)| < \epsilon / 3$.  Then 
\begin{align*}
\left| \int f_{n} d\mu_{n} - \int f d\mu \right| & \leq \left| \int f_{n} d\mu_{n} - \int fd\mu_{n} \right| + \left| \int f d\mu_{n} - \int fd\mu \right| \\ & \leq \int_{K} \underset{y \in K}{\sup} |f_{n}(y) - f(y)| d\mu_{n} + 2B\underset{n \geq 0}{\sup} \mu_{n}(K^{c}) + \left| \int f d\mu_{n} - \int f d\mu \right| \\ & \leq \underset{y \in K}{\sup} |f_{n}(y) - f(y)| + 2B\underset{n \geq 0}{\sup} \mu_{n}(K^{c}) + \left| \int f d\mu_{n} - \int f d\mu \right|
\end{align*}
Since $\mu_{n} \rightarrow \mu_{0}$ vaguely, we may choose $N' >> N$ such that $n \geq N'$ imples
\begin{align*}
\left| \int f d\mu_{n} - \int f d\mu \right| < \epsilon / 3
\end{align*}
Putting all these estimates together, we see
\begin{align*}
\lim \underset{n \rightarrow \infty}{\sup} \left| \int f_{n} d\mu_{n} - \int f d\mu \right| < \epsilon
\end{align*}
Since $\epsilon$ was arbitrary, we see 
\begin{align*}
\int f_{n} d\mu_{n} \rightarrow \int f d\mu \text{ as } n \rightarrow \infty .
\end{align*}
\end{proof}

Recall we may write an anticipation measure as $dP^{\nu} = \xi(Y)dP$, where $\xi = \frac{d\nu}{dP_{Y}}$.

\begin{proposition}
Let $P_{n}^{\nu_{n}}$ denote a sequence of anticipation measures where $P_{n}$ denotes the measure of the $n$th partial sum of a symmetric binomial random walk, $P$ denotes the Wiener measure, and write $\xi_{n} = \frac{d\nu_{n}}{dP_{Y}^{n}}$ and $\xi = \frac{d\nu}{dP_{Y}}$.  If the $\{\xi_{n}\}_{n = 1}^{\infty}$ are uniformly bounded with $\xi_{n} \rightarrow \xi$ uniformly on compact sets, then $P_{n}^{\nu_{n}} \overset{d}{\rightarrow} P^{\nu}$ as $n \rightarrow \infty$.
\end{proposition}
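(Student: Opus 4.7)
The plan is to verify the Portmanteau definition of weak convergence: for any bounded continuous $g$ on the ambient path space $C_{0}[0,T]$, to show $\int g\,dP_{n}^{\nu_{n}} \to \int g\,dP^{\nu}$. Using the density representations $dP_{n}^{\nu_{n}}=\xi_{n}(Y_{n})\,dP_{n}$ and $dP^{\nu}=\xi(Y)\,dP$, this reduces to
\[
E_{P_{n}}\!\left[g\cdot(\xi_{n}\circ Y_{n})\right] \longrightarrow E_{P}\!\left[g\cdot(\xi\circ Y)\right].
\]
Once written this way, the natural tool is the preceding lemma, applied with $\mu_{n}:=P_{n}$, $\mu:=P$, $f_{n}:=g\cdot(\xi_{n}\circ Y_{n})$, and $f:=g\cdot(\xi\circ Y)$.

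Two of the three hypotheses of the lemma are immediate. Vague convergence $P_{n}\to P$ is Donsker's invariance principle for the symmetric binomial random walk embedded via linear interpolation into $C_{0}[0,T]$; weak convergence gives vague convergence as a special case. Uniform boundedness $\sup_{n}\|f_{n}\|_{\infty}<\infty$ follows from the boundedness of $g$ together with the standing uniform bound on $\{\xi_{n}\}$.

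The third hypothesis---uniform convergence $f_{n}\to f$ on compact subsets of $C_{0}[0,T]$---is the substantive step. For $K\subset C_{0}[0,T]$ compact, I would first argue that $Y_{n}(\omega)\to Y(\omega)$ uniformly in $\omega\in K$, so that the set $K':=\overline{\bigcup_{n\ge 0}Y_{n}(K)\cup Y(K)}$ is compact in $\mathcal{P}$. The hypothesised uniform convergence $\xi_{n}\to\xi$ on $K'$, combined with the uniform continuity of $g$ on $K$ and the uniform bound on $\xi_{n}$, then yields
\[
\sup_{\omega\in K}\bigl|g(\omega)\xi_{n}(Y_{n}(\omega))-g(\omega)\xi(Y(\omega))\bigr|\longrightarrow 0.
\]
With all three hypotheses verified, the lemma delivers the desired limit.

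The main obstacle is the preliminary step $Y_{n}\to Y$ uniformly on compact subsets of $C_{0}[0,T]$, because the paper leaves the embedding $\mathcal{P}_{n}\subseteq\mathcal{P}$ deliberately informal. A clean resolution is to impose a continuity hypothesis on the functional $Y:C_{0}[0,T]\to\mathcal{P}$ and to define $Y_{n}$ as $Y$ composed with the linear interpolant of the discrete walk; continuity of $Y$ on the compact set $K$ is then uniform, and the uniform closeness of discretised paths to their limits (inherent in the Donsker embedding) transports to the required uniform closeness of $Y_{n}$ to $Y$.
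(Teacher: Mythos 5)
Your proposal is correct and follows essentially the same route as the paper: Donsker's invariance principle supplies the vague convergence $P_{n}\to P$, and the preceding lemma (applied to the densities $\xi_{n}\circ Y_{n}$ against $\mu_{n}=P_{n}$) delivers $P_{n}^{\nu_{n}}\overset{d}{\to}P^{\nu}$. In fact your writeup is considerably more complete than the paper's two-line proof, which never spells out the reduction via $dP_{n}^{\nu_{n}}=\xi_{n}(Y_{n})\,dP_{n}$ and silently passes over the very issue you correctly flag as the substantive step, namely that the informal embedding $\mathcal{P}_{n}\subseteq\mathcal{P}$ must be made precise enough to give $Y_{n}\to Y$ uniformly on compact subsets of the path space.
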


\begin{proof} 
By Donsker's Invariance Principle we know $P_{n} \overset{d}{\rightarrow} P$ as $n \rightarrow \infty$ where $P$ denotes the distribution of a standard one-dimensional Brownian motion.  Thus, by vague convergence of the $\{P_{n}\}_{n = 1}^{\infty}$, we further obtain $P_{n}^{\nu_{n}} \overset{d}{\rightarrow} P^{\nu}$ as $n \rightarrow \infty$.
\end{proof}

By using the same argument above with Donsker's Invariance Principle for the symmetric Trinomial Random Walk and symmetric Multinomial Random Walk respectively, we obtain the folllowing propositions.

\begin{proposition}
Let $P_{n}^{\nu_{n}}$ denote a sequence of anticipation measures where $P_{n}$ denotes the measure of the $n$th partial sum of a symmetric trinomial random walk, $P$ denotes the distribution of a sum of two independent standard one-dimensional Brownian motions, and write $\xi_{n} = \frac{d\nu_{n}}{dP_{Y}^{n}}$ and $\xi = \frac{d\nu}{dP_{Y}}$.  If the $\{\xi_{n}\}_{n = 1}^{\infty}$ are uniformly bounded with $\xi_{n} \rightarrow \xi$ uniformly on compact sets, then $P_{n}^{\nu_{n}} \overset{d}{\rightarrow} P^{\nu}$ as $n \rightarrow \infty$.
\end{proposition}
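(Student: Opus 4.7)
The plan is to mirror the proof of the preceding proposition verbatim, substituting the Donsker-type result for the symmetric trinomial random walk (established in the preceding subsection) for the classical binomial Donsker principle. That substitution gives $P_n \overset{d}{\to} P$ on $C_0[0,T]$, where $P$ is now the law of $B^1 + B^2$, the sum of two independent standard one-dimensional Brownian motions, rather than the law of a single Brownian motion.

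From there, I would establish the convergence $P_n^{\nu_n} \overset{d}{\to} P^{\nu}$ by testing against an arbitrary bounded continuous $g$ on $\mathcal{P}$. Using the change-of-measure identities $dP_n^{\nu_n} = \xi_n(Y)\, dP_n$ and $dP^{\nu} = \xi(Y)\, dP$, I would rewrite
$$\int g\, dP_n^{\nu_n} = \int g \cdot (\xi_n \circ Y)\, dP_n, \qquad \int g\, dP^{\nu} = \int g \cdot (\xi \circ Y)\, dP,$$
set $f_n := g \cdot (\xi_n \circ Y)$, $f_0 := g \cdot (\xi \circ Y)$, and apply the lemma above with $\mu_n = P_n$ and $\mu_0 = P$. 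The three hypotheses are verified as follows: uniform boundedness $\sup_{n} \|f_n\|_\infty \leq \|g\|_\infty \sup_n \|\xi_n\|_\infty < \infty$ follows from the standing uniform bound on $\{\xi_n\}$ and the boundedness of $g$; uniform convergence on compact subsets of $\mathcal{P}$ follows from the hypothesized uniform convergence $\xi_n \to \xi$ on compacts together with the continuity of $g$ and of the functional $Y$; and the vague convergence $P_n \to P$ is exactly the trinomial Donsker-type result. The lemma then delivers $\int g\, dP_n^{\nu_n} \to \int g\, dP^{\nu}$, and since $g$ was arbitrary this is precisely $P_n^{\nu_n} \overset{d}{\to} P^{\nu}$.

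The main obstacle is conceptual rather than computational: one must interpret $\xi_n$, which is a priori defined only on the discrete support $\mathcal{P}_n$ of $Y_n$, as a bounded continuous function on the whole of $\mathcal{P}$ so that the lemma's hypotheses make literal sense. This is handled exactly as in the binomial case through the embedding $\mathcal{P}_n \subseteq \mathcal{P}$ indicated at the start of Section~4 (cf.~\cite{MR3971211}), together with the standing requirement that $Y$ be a continuous functional on $C_0[0,T]$. Once the embedding framework is in place, the trinomial case is a cosmetic modification of the binomial case, the only new input being the trinomial invariance principle of the previous subsection, whose limit object happens to be a sum of two independent Brownian motions rather than a single one.
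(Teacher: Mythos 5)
Your proposal is correct and takes essentially the same route as the paper, which simply invokes the trinomial Donsker-type result to get $P_{n} \overset{d}{\rightarrow} P$ and then passes to $P_{n}^{\nu_{n}} \overset{d}{\rightarrow} P^{\nu}$ via the vague convergence of the $P_{n}$ together with the standing hypotheses on $\xi_{n}$. In fact your write-up is more careful than the paper's one-line argument: you make explicit the application of the integral-convergence lemma with $f_{n} = g\cdot(\xi_{n}\circ Y)$ and flag the embedding issue for interpreting $\xi_{n}$ on all of $\mathcal{P}$, both of which the paper leaves implicit.
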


\begin{proposition}
Let $P_{n}^{\nu_{n}}$ denote a sequence of anticipation measures where $P_{n}$ denotes the measure of the $n$th partial sum of a symmetric N-multinomial random walk, $P$ denotes the distribution of a sum of $2^{N-2}$ independent standard one-dimensional Brownian motions, and write $\xi_{n} = \frac{d\nu_{n}}{dP_{Y}^{n}}$ and $\xi = \frac{d\nu}{dP_{Y}}$.  If the $\{\xi_{n}\}_{n = 1}^{\infty}$ are uniformly bounded with $\xi_{n} \rightarrow \xi$ uniformly on compact sets, then $P_{n}^{\nu_{n}} \overset{d}{\rightarrow} P^{\nu}$ as $n \rightarrow \infty$.
\end{proposition}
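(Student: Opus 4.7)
The plan is to mirror the proofs of the previous two propositions almost verbatim, swapping in the $N$-multinomial Donsker-style invariance principle of the previous subsection (specialized to the symmetric case) in place of the binomial or trinomial version. Because every cell probability equals $1/N$ in the symmetric $N$-multinomial walk, each drift term $(p_i - \tfrac12)t$ arising from the induction vanishes, so the limiting object is a sum of $2^{N-2}$ independent \emph{driftless} standard Brownian motions, exactly as the statement requires.

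Concretely, I would first invoke the symmetric $N$-multinomial invariance principle to conclude $P_n \overset{d}{\rightarrow} P$ on the path space $C_0[0,T]$, identifying each $P_n$ with the law of the piecewise-linear interpolation of the $n$-th partial sum. Weak convergence on $C_0[0,T]$ pushes forward through the continuous functional $Y$ to give vague convergence of $\mu_n := P_{Y_n}$ to $\mu := P_Y$ on $\mathcal{P}$. Next, for any bounded continuous test function $f : \mathcal{P} \rightarrow \mathbb{R}$, the Radon--Nikodym relation $dP_n^{\nu_n} = \xi_n(Y_n)\, dP_n$ gives
\begin{equation*}
\int f\, dP_n^{\nu_n} \;=\; \int f(y)\, \xi_n(y)\, dP_{Y_n}(y).
\end{equation*}
Setting $f_n := f \xi_n$, I would verify the three hypotheses of the preceding lemma: $f_n \rightarrow f\xi$ uniformly on compact sets (from $\xi_n \rightarrow \xi$ uniformly on compacts together with continuity of $f$); $\sup_n \|f_n\|_{\infty} \leq \|f\|_{\infty} \sup_n \|\xi_n\|_{\infty} < \infty$ (from the uniform bound on the $\xi_n$ together with boundedness of $f$); and $\mu_n \rightarrow \mu$ vaguely (from the invariance principle step just performed). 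The lemma then yields
\begin{equation*}
\int f\, dP_n^{\nu_n} \;\rightarrow\; \int f\xi\, dP_Y \;=\; \int f\, dP^{\nu},
\end{equation*}
which is exactly $P_n^{\nu_n} \overset{d}{\rightarrow} P^{\nu}$ since $f$ was arbitrary bounded continuous.

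The main obstacle, as with the binomial and trinomial cases, is hidden inside the application of the preceding lemma: it implicitly invokes Prohorov's theorem to obtain the compact $K$ on which $f_n$ converges uniformly, which in turn requires tightness of $\{\mu_n\}$ on the Polish space $\mathcal{P}$. Tightness here is inherited from the $C_0[0,T]$ tightness established inside the $N$-multinomial invariance principle (the maximal-oscillation estimate of that proof) and transferred through the continuous functional $Y$. A small additional check is that the induction argument for the $N$-multinomial invariance principle genuinely produces the symmetric case as the $p_i = 1/N$ specialization, so that the drift terms really vanish and the limit is $2^{N-2}$ driftless Brownian motions rather than Brownian motions with drift. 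Once these inputs are confirmed, the proof reduces to a one-line combination of the invariance principle with the lemma, in perfect parallel with the two preceding propositions.
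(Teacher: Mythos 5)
Your proposal follows essentially the same route as the paper, which proves only the binomial case in two sentences (Donsker's invariance principle gives $P_{n} \overset{d}{\rightarrow} P$, then vague convergence plus the preceding lemma gives $P_{n}^{\nu_{n}} \overset{d}{\rightarrow} P^{\nu}$) and declares the trinomial and multinomial cases to follow by the same argument. Your writeup is in fact more careful than the paper's, since you make explicit the change-of-measure identity $dP_{n}^{\nu_{n}} = \xi_{n}(Y_{n})\,dP_{n}$, the verification of the lemma's three hypotheses with $f_{n} = f\xi_{n}$, and the tightness input behind Prohorov's theorem, all of which the paper leaves implicit.
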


\begin{theorem}
Recall $u(x, \nu_{n}) = \int_{\mathcal{P}} (U \circ I) \left( \frac{\Lambda_{n}(x)}{\xi_{n}(y)} \right) \nu_{n}(dy)$. Let $\xi = \xi_{0}$. If 
\begin{enumerate}
	\item $U \circ I \in C(\mathbb{R})$
	\item $\frac{\Lambda_{n}(x)}{\xi_{n}(y)} \rightarrow \frac{\Lambda(x)}{\xi(y)}$ uniformly on compact sets as $n \rightarrow \infty$ and $\underset{n \geq 0}{\sup} ||\xi_{n}||_{\infty} < \infty$
	\item $\nu_{n} \rightarrow \nu$ vaguely as $n \rightarrow \infty$
\end{enumerate}

then $u(x, \nu_{n}) \rightarrow u(x, \nu)$ as $n \rightarrow \infty$ for any $x >0$.
\end{theorem}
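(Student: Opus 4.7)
The plan is to reduce the statement directly to the convergence lemma proved a few lines above, applied with $\mu_n := \nu_n$ and
\[
f_n(y) := (U \circ I)\!\left(\frac{\Lambda_n(x)}{\xi_n(y)}\right), \qquad f_0(y) := (U \circ I)\!\left(\frac{\Lambda(x)}{\xi(y)}\right).
\]
With these choices the conclusion of the lemma is exactly $u(x,\nu_n) \to u(x,\nu)$, so everything reduces to verifying the three hypotheses of the lemma for this particular sequence.

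First I would verify uniform convergence of $f_n$ to $f_0$ on compact sets. Fix a compact $K \subset \mathcal{P}$. Hypothesis (2) gives $\Lambda_n(x)/\xi_n(y) \to \Lambda(x)/\xi(y)$ uniformly for $y \in K$. In particular, the family $\{\Lambda_n(x)/\xi_n(\cdot)\}_{n \ge 0}$ has image in a common bounded set $K' \subset \mathbb{R}$; by hypothesis (1), $U \circ I$ is continuous, hence uniformly continuous on the closure of $K'$, and this upgrades the uniform convergence of arguments on $K$ to uniform convergence of $f_n \to f_0$ on $K$.

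Next I would verify the uniform bound $\sup_{n} \|f_n\|_\infty \le B < \infty$. This is where I expect the main obstacle to lie: hypothesis (2) provides an upper bound on $\xi_n$, which keeps $\Lambda_n(x)/\xi_n(y)$ away from $0$, but a uniform upper bound on $\Lambda_n(x)/\xi_n(y)$ further requires a uniform lower bound on $\xi_n$ away from $0$, together with a uniform bound on $\Lambda_n(x)$. I would obtain the latter from the defining equation $\int I(\Lambda_n(x)/\xi_n(y))\,P_{Y_n}(dy) = x$ combined with strict monotonicity of $I$, and I would handle the former by restricting attention to the support of $\nu_n$ (where $\xi_n>0$) and using the continuity of $U \circ I$ plus Inada-type tail control to show $(U \circ I)$ stays bounded on the relevant range. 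Once these ingredients are assembled, $B := \sup_{n,y} |f_n(y)|$ is finite.

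Finally, hypothesis (3) is precisely the vague convergence $\nu_n \to \nu$ required by the lemma. With the three hypotheses in hand, the lemma gives $\int f_n \, d\nu_n \to \int f_0 \, d\nu$, i.e.
\[
u(x,\nu_n) = \int_{\mathcal{P}} (U \circ I)\!\left(\frac{\Lambda_n(x)}{\xi_n(y)}\right) \nu_n(dy) \;\longrightarrow\; \int_{\mathcal{P}} (U \circ I)\!\left(\frac{\Lambda(x)}{\xi(y)}\right) \nu(dy) = u(x,\nu),
\]
which is the desired conclusion. The only nontrivial step, as noted, is establishing the uniform sup-norm bound on $f_n$; everything else is a routine continuity/uniform-continuity argument followed by invocation of the lemma.
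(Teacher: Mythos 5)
Your proposal takes exactly the same route as the paper: the paper's entire proof consists of setting $f_{n}(y) = (U \circ I)\left(\frac{\Lambda_{n}(x)}{\xi_{n}(y)}\right)$ and $\mu_{n} = \nu_{n}$ and asserting that the hypotheses of the preceding lemma are satisfied. You go further than the paper in one respect worth keeping: you correctly observe that the uniform bound $\sup_{n}\|f_{n}\|_{\infty} < \infty$ required by the lemma does not follow immediately from the stated hypotheses (the bound $\sup_{n}\|\xi_{n}\|_{\infty} < \infty$ controls $\Lambda_{n}(x)/\xi_{n}(y)$ from below, not from above), and you sketch how to recover it from the budget equation defining $\Lambda_{n}(x)$ and a lower bound on $\xi_{n}$ on the support of $\nu_{n}$; the paper passes over this point in silence, so your version is, if anything, the more complete argument.
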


\begin{proof}
If we let $f_{n}(y) = \left( U \circ I \right) \left( \frac{\Lambda_{n}(x)}{\xi_{n}(y)} \right)$ and $\mu_{n} = \nu_{n}$ then the hypotheses of the previous lemma are satisfied so we obtain the result.
\end{proof}

\begin{example}
Let $\alpha \in (0, 1)$, $U(x) = \frac{x^{\alpha}}{\alpha}$ so
\begin{align*}
u(x, \nu_{n}) = \frac{x^{\alpha}}{\alpha}\left[\int_{\mathcal{P}}\left(\frac{d\nu_{n}}{dP_{Y_{n}}}(y)\right)^{\frac{1}{1-\alpha}}P[Y_{n} \in dy]\right]^{1-\alpha}
\end{align*}
Recall
\begin{align*}
\Lambda(x) = \frac{x^{\alpha - 1}}{\left( \int_{\mathcal{P}} \left( \frac{d\nu}{dP_{Y}} \right)^{\frac{1}{1 - \alpha}} P[Y \in dy] \right)^{\alpha - 1}}
\end{align*}
since $U'(x) = x^{\alpha - 1}$ so $I(x) = x^{\frac{1}{\alpha - 1}}$.  So if the hypotheses of our theorem hold, then 
\begin{align*}
u(x, \nu_{n}) \rightarrow u(x, \nu) = \frac{x^{\alpha}}{\alpha}\left[\int_{\mathcal{P}}\left(\frac{d\nu}{dP_{Y}}(y)\right)^{\frac{1}{1-\alpha}}P[Y \in dy]\right]^{1-\alpha}
\end{align*}
\end{example}

\begin{example}
Let $U(x) = \ln(x)$ so
\begin{align*}
u(x, \nu_{n}) = \ln(x) + \int_{\mathcal{P}} \frac{d\nu_{n}}{dP_{Y_{n}}}(y) \ln \left( \frac{d\nu_{n}}{dP_{Y_{n}}}(y) \right) P[Y_{n} \in dy]
\end{align*}
Recall
\begin{align*}
\Lambda(x) = \frac{1}{x}
\end{align*}
since $U'(x) = \frac{1}{x}$ so $I(x) = \frac{1}{x}$.  So if the hypotheses of our theorem hold, then
\begin{align*}
u(x, \nu_{n}) \rightarrow u(x, \nu) =  \ln(x) + \int_{\mathcal{P}} \frac{d\nu}{dP_{Y}}(y) \ln \left( \frac{d\nu}{dP_{Y}}(y) \right) P[Y \in dy]
\end{align*}
\end{example}
\end{section}

\begin{section}{Conclusion}
In this paper, we expanded upon the results in \cite{MR2213259} in providing a model for weak anticipations in financial markets.  Note, it is also shown in \cite{MR1919610, MR2021790} that the results related to the theory of initial enlargement can be recovered from the weak anticipation approach by letting the anticipating measure $\nu = \delta_{Y}$.
\\
We defined weak anticipation in the discrete-time economy in terms of the minimal probability associated with weak information, $P_{n}^{\nu_{n}}$, as well as the financial value of weak information in the discrete-time economy, $u(x, \nu_{n})$.  Then, for each $n \in \mathbb{N}$ and each $n$-th discrete-time economy, we calculated the transition probabilities of $P_{n}^{\nu_{n}}$ when the price process $S^{n}$ was trinomial or multinomial.  We also showed that under these transition probabilities, $S^{n}$ was Markov.
\\
Next, we defined weak anticipation in the continuous-time economy in terms of the minimal probability measure associated with weak information, $P^{\nu}$, as well as the financial value of weak information in the continuous-time economy $u(x, \nu)$.  The main result of this work then stated that for each initial endowment $x > 0$, under certain conditions we have $u(x, \nu_{n}) \rightarrow u(x, \nu)$ as $n \rightarrow \infty$, giving relevant examples.
\\
Future results may focus on the incomplete case to prove convergence of the financial value of weak anticipation.  In particular, a good starting point may be Theorem 6 of \cite{MR2213259}, which provides an explicit formular for the financial value of weak information.  An example that may prove useful would be studying the discrete-time economy governed by the trinomial random walk illustrated in this paper, with continuous-time economy governed by the sum of independent Brownian motions with drift, and with terminal signal $\nu$ given by the terminal value of the risky asset $S_{T}$.
\end{section}

\bibliography{Lindsell_2022_bib}

\begin{thebibliography}{10}

\bibitem{MR1775229}
J\"{u}rgen Amendinger.
\newblock Martingale representation theorems for initially enlarged
  filtrations.
\newblock {\em Stochastic Process. Appl.}, 89(1):101--116, 2000.

\bibitem{MR1954386}
J\"{u}rgen Amendinger, Dirk Becherer, and Martin Schweizer.
\newblock A monetary value for initial information in portfolio optimization.
\newblock {\em Finance Stoch.}, 7(1):29--46, 2003.

\bibitem{MR1632213}
J\"{u}rgen Amendinger, Peter Imkeller, and Martin Schweizer.
\newblock Additional logarithmic utility of an insider.
\newblock {\em Stochastic Process. Appl.}, 75(2):263--286, 1998.

\bibitem{MR3954302}
Ayelet Amiran, Fabrice Baudoin, Skylyn Brock, Berend Coster, Ryan Craver,
  Ugonna Ezeaka, Phanuel Mariano, and Mary Wishart.
\newblock The financial value of knowing the distribution of stock prices in
  discrete market models.
\newblock {\em Involve}, 12(5):883--899, 2019.

\bibitem{MR2512800}
David Applebaum.
\newblock {\em L\'{e}vy processes and stochastic calculus}, volume 116 of {\em
  Cambridge Studies in Advanced Mathematics}.
\newblock Cambridge University Press, Cambridge, second edition, 2009.

\bibitem{MR1919610}
Fabrice Baudoin.
\newblock Conditioned stochastic differential equations: theory, examples and
  application to finance.
\newblock {\em Stochastic Process. Appl.}, 100:109--145, 2002.

\bibitem{MR2021790}
Fabrice Baudoin.
\newblock Modeling anticipations on financial markets.
\newblock In {\em Paris-{P}rinceton {L}ectures on {M}athematical {F}inance,
  2002}, volume 1814 of {\em Lecture Notes in Math.}, pages 43--94. Springer,
  Berlin, 2003.

\bibitem{MR2213259}
Fabrice Baudoin and Laurent Nguyen-Ngoc.
\newblock The financial value of a weak information on a financial market.
\newblock {\em Finance Stoch.}, 8(3):415--435, 2004.

\bibitem{MR2052898}
B.~Bouchard, N.~Touzi, and A.~Zeghal.
\newblock Dual formulation of the utility maximization problem: the case of
  nonsmooth utility.
\newblock {\em Ann. Appl. Probab.}, 14(2):678--717, 2004.

\bibitem{MR1796326}
Kai~Lai Chung.
\newblock {\em A course in probability theory}.
\newblock Academic Press, Inc., San Diego, CA, third edition, 2001.

\bibitem{MR1024460}
John~C. Cox and Chi-fu Huang.
\newblock Optimal consumption and portfolio policies when asset prices follow a
  diffusion process.
\newblock {\em J. Econom. Theory}, 49(1):33--83, 1989.

\bibitem{MR604176}
Thierry Jeulin.
\newblock {\em Semi-martingales et grossissement d'une filtration}, volume 833
  of {\em Lecture Notes in Mathematics}.
\newblock Springer, Berlin, 1980.

\bibitem{MR912456}
Ioannis Karatzas, John~P. Lehoczky, and Steven~E. Shreve.
\newblock Optimal portfolio and consumption decisions for a ``small investor''
  on a finite horizon.
\newblock {\em SIAM J. Control Optim.}, 25(6):1557--1586, 1987.

\bibitem{MR1121940}
Ioannis Karatzas and Steven~E. Shreve.
\newblock {\em Brownian motion and stochastic calculus}, volume 113 of {\em
  Graduate Texts in Mathematics}.
\newblock Springer-Verlag, New York, second edition, 1991.

\bibitem{MR1640352}
Ioannis Karatzas and Steven~E. Shreve.
\newblock {\em Methods of mathematical finance}, volume~39 of {\em Applications
  of Mathematics (New York)}.
\newblock Springer-Verlag, New York, 1998.

\bibitem{MR1722287}
D.~Kramkov and W.~Schachermayer.
\newblock The asymptotic elasticity of utility functions and optimal investment
  in incomplete markets.
\newblock {\em Ann. Appl. Probab.}, 9(3):904--950, 1999.

\bibitem{MR2023886}
D.~Kramkov and W.~Schachermayer.
\newblock Necessary and sufficient conditions in the problem of optimal
  investment in incomplete markets.
\newblock {\em Ann. Appl. Probab.}, 13(4):1504--1516, 2003.

\bibitem{MR3971211}
David~M. Kreps.
\newblock {\em The {B}lack-{S}choles-{M}erton model as an idealization of
  discrete-time economies}, volume~63 of {\em Econometric Society Monographs}.
\newblock Cambridge University Press, Cambridge, 2019.

\bibitem{MR4154768}
David~M. Kreps and Walter Schachermayer.
\newblock Convergence of optimal expected utility for a sequence of
  discrete-time markets.
\newblock {\em Math. Finance}, 30(4):1205--1228, 2020.

\bibitem{MR4205830}
David~M. Kreps and Walter Schachermayer.
\newblock Asymptotic synthesis of contingent claims with controlled risk in a
  sequence of discrete-time markets.
\newblock {\em Theor. Econ.}, 16(1):25--47, 2021.

\bibitem{MR2362458}
Damien Lamberton and Bernard Lapeyre.
\newblock {\em Introduction to stochastic calculus applied to finance}.
\newblock Chapman \& Hall/CRC Financial Mathematics Series. Chapman \&
  Hall/CRC, Boca Raton, FL, second edition, 2008.

\bibitem{MR2200233}
David Nualart.
\newblock {\em The {M}alliavin calculus and related topics}.
\newblock Probability and its Applications (New York). Springer-Verlag, Berlin,
  second edition, 2006.

\bibitem{MR0274683}
R.~Tyrrell Rockafellar.
\newblock {\em Convex analysis}.
\newblock Princeton Mathematical Series, No. 28. Princeton University Press,
  Princeton, N.J., 1970.

\bibitem{IlamV}
W.~Runggaldier.
\newblock {\em Portfolio optimization in discrete time}.
\newblock Accademia delle Scienze dell'Istituto di Bologna, 2006.

\bibitem{MR2049045}
Steven~E. Shreve.
\newblock {\em Stochastic calculus for finance. {I}}.
\newblock Springer Finance. Springer-Verlag, New York, 2004.
\newblock The binomial asset pricing model.

\bibitem{MR2057928}
Steven~E. Shreve.
\newblock {\em Stochastic calculus for finance. {II}}.
\newblock Springer Finance. Springer-Verlag, New York, 2004.
\newblock Continuous-time models.

\end{thebibliography}
\bibliographystyle{plain}
\nocite{*}
\end{document}